\newtheorem{theorem}{Theorem}[section]
\newtheorem{lemma}[theorem]{Lemma}
\newtheorem{definition}[theorem]{Definition}
\newtheorem{remark}[theorem]{Remark}
\newtheorem{proposition}[theorem]{Proposition}
\newtheorem*{theorem A}{Theorem A}
\newtheorem*{corollary B}{Corollary B}
\newtheorem*{theorem C}{Theorem C}
\newtheorem*{corollary D}{Corollary D}
\theoremstyle{definition}
\begin{document}
\title[Variational principle for topological pressure and entropy on subsets of NDSs]{Variational principles on subsets\\
 of non-autonomous dynamical systems:\\
  topological pressure and topological entropy}
\author[J. Nazarian Sarkooh]{Javad Nazarian Sarkooh$^{*}$}
\address{Department of Mathematics, Ferdowsi University of Mashhad, Mashhad, IRAN.}
\email{\textcolor[rgb]{0.00,0.00,0.84}{javad.nazariansarkooh@gmail.com}}
\subjclass[2010]
{37B55; 37B40; 37A50; 37D35.}
 \keywords{Non-autonomous dynamical system, Topological pressure, Topological entropy, measure-theoretic pressure, measure-theoretic entropy, variational principle.}
 \thanks{$^*$Corresponding author}
\begin{abstract}
This paper discusses the variational principles on subsets for topological pressure and topological entropy
of non-autonomous dynamical systems. Let $(X, f_{1,\infty})$ be a non-autonomous dynamical system and $\psi$ be a continuous potential on $X$, where $(X,d)$ is a compact metric space and $f_{1,\infty}=(f_n)_{n=1}^\infty$ is a sequence of continuous maps $f_n: X\to X$. We define the Pesin-Pitskel topological pressure (weighted topological pressure) $P_{f_{1,\infty}}^{B}(Z,\psi)$ ($P_{f_{1,\infty}}^{\mathcal{W}}(Z,\psi)$) and the Bowen topological entropy (weighted Bowen topological entropy) $h_{top}^{B}(f_{1,\infty},Z)$ ($h_{top}^{WB}(f_{1,\infty},Z)$) for any subset $Z$ of $X$. Also, we define the measure-theoretic pressure $P_{\mu,f_{1,\infty}}(X,\psi)$ and the measure-theoretic lower entropy $\underline{h}_{\mu}(f_{1,\infty})$ for any $\mu\in\mathcal{M}(X)$, where $\mathcal{M}(X)$ denotes the set of all Borel probability measures on $X$. Then, for any nonempty compact subset $Z$ of $X$, we show the following variational principles for topological pressure and topological entropy
\begin{equation*}
P_{f_{1,\infty}}^{B}(Z,\psi)=P_{f_{1,\infty}}^{\mathcal{W}}(Z,\psi)=\sup\{P_{\mu,f_{1,\infty}}(X,\psi):\mu\in\mathcal{M}(X), \mu(Z)=1\},
\end{equation*}
\begin{equation*}
h_{top}^{B}(f_{1,\infty},Z)=h_{top}^{WB}(f_{1,\infty},Z)=\sup\{\underline{h}_{\mu}(f_{1,\infty}):\mu\in\mathcal{M}(X), \mu(Z)=1\}.
\end{equation*}
Moreover, we show that the Pesin-Pitskel topological pressure (weighted topological pressure) and the Bowen topological entropy (weighted Bowen topological entropy) can be determined by the measure-theoretic pressure and the measure-theoretic lower entropy of Borel probability measures, respectively. These results extend Feng and Huang's results on entropies \cite{DFWH}, Ma and Wen's results on entropies \cite{MJHWZY}, and Tang et al. results on pressures \cite{TXCWCZY} for classical dynamical systems to pressures and entropies of non-autonomous dynamical systems. 
\end{abstract}

\maketitle
\thispagestyle{empty}

\section{Introduction}
Throughout this paper $X$ is a compact metric space with metric $d$ and $f_{1,\infty}=(f_n)_{n=1}^\infty$ is a sequence of continuous maps $f_n: X\to X$, such a pair $(X, f_{1,\infty})$ is a non-autonomous dynamical system so-called time-dependent (NDS for short). The non-autonomous systems yield very flexible models than autonomous cases for the study and description of real-world processes. They may be used to describe the evolution of a wider class of phenomena, including systems that are forced or driven. Recently, there have been major efforts in establishing a general theory of such systems (see \cite{BV,JNSFHG1,K1,K2,K3,KR,KS,JNSFHG,JNS000,O,DTRD}), but a global theory is still out of reach. Our main goal in this paper is to describe the topological and measure-theoretical aspects of the thermodynamic formalism for non-autonomous dynamical systems.
Thermodynamic formalism, that is the formalism of equilibrium statistical physics, was adapted to the theory of dynamical systems in the classical works of Sinai, Ruelle, and Bowen in the 1970s \cite{RB5,DR000,DR111,DR222}. Topological pressure, topological entropy, Gibbs measures, and equilibrium states are the fundamental notions in thermodynamic formalism. 
Topological pressure is the main tool in studying the dimension of invariant sets and measures for dynamical systems in dimension theory. The notion of entropy, on the other hand, is one of the most important objects in dynamical systems, either as a topological invariant or as a measure of the chaoticity of dynamical systems. For uniformly hyperbolic systems Bowen \cite{RB5} presented a complete description of thermodynamic formalism, but in the non-uniformly hyperbolic case a general theory of thermodynamic formalism, despite substantial progress by several authors, is far from being complete.

In the theory of dynamical systems, entropies are nonnegative extended real numbers measuring the complexity of a dynamical system. Topological entropy was first introduced by Adler et al. \cite{AKM} via open covers for continuous maps in compact topological spaces. In 1970, Bowen \cite{RB} gave another definition in terms of separated and spanning sets for uniformly continuous maps in metric spaces, and this definition is equivalent to Adler's definition for continuous maps in compact metric spaces. Later, for noncompact sets, Bowen \cite{RB4} gave a characterization of dimension type for the entropy, which was further investigated by Pesin and Pitskel \cite{PYBPBS}. Also, the measure-theoretic (metric) entropy for an invariant measure was introduced by Kolmogorov \cite{KAAAA}. The basic relation between topological entropy and measure-theoretic entropy is the variational principle, see \cite{PW}. Note that, topological entropy has close relationships with many important dynamical properties, such as chaos, Lyapunov exponents, the growth of the number of periodic points, and so on. 

In 1996, Kolyada and Snoha extended the concept of topological entropy to non-autonomous systems, based on open covers, separated sets, and spanning sets, and obtained a series of important properties of these systems \cite{KS}. Recently, Kawan \cite{K1,K2,K3} introduced and studied the notion of metric entropy for
non-autonomous dynamical systems and showed that it is related via variational inequality (principle) to the topological entropy as defined by Kolyada and Snoha. More precisely, for equicontinuous topological
non-autonomous dynamical systems
$(X_{1,\infty},f_{1,\infty})$, he proved the variational inequality
\begin{equation*}
\sup_{\mu_{1,\infty}}h_{\mathcal{E}_{M}}(f_{1,\infty},\mu_{1,\infty})\leq h_{\text{top}}(f_{1,\infty}),
\end{equation*}
where $h_{\text{top}}(f_{1,\infty})$ and $h_{\mathcal{E}_{M}}(f_{1,\infty},\mu_{1,\infty})$ are the topological
and metric entropy of $(X_{1,\infty},f_{1,\infty})$, the supremum is taken over all invariant measure sequences $\mu_{1,\infty}$ and $\mathcal{E}_{M}$ is Misiurewicz class of partitions. Also, for non-autonomous dynamical systems $(M,f_{1,\infty})$ built from $C^{1}$ expanding maps $f_{n}$ on a compact Riemannian manifold $M$ with uniform bounds on expansion factors and derivatives that act in the same way on the fundamental group of $M$, he proved the full variational principle
\begin{equation*}
\sup_{\mu_{1,\infty}}h_{\mathcal{E}_{M}}(f_{1,\infty},\mu_{1,\infty})=h_{\text{top}}(f_{1,\infty}).
\end{equation*}

Topological pressure, as a non-trivial and natural generalization of topological entropy, roughly measures the orbit complexity of the iterated map on the potential function and is one of the most fundamental dynamical invariants associated with a continuous map and a potential function. 
Ruelle \cite{DR} introduced topological pressure of a continuous function for $\mathbb{Z}^{n}$-actions on compact spaces and established the variational principle for topological pressure and then Walters \cite{PW1} generalized the variational principle for a $\mathbb{Z}^{+}$-action without these assumptions. Misiurewicz \cite{MMA} gave an elegant proof of the variational principle for $\mathbb{Z}_{+}^{n}$-actions. From a viewpoint of dimension theory, Pesin and Pitskel \cite{PYBPBS} defined the topological pressure on noncompact sets which is a generalization of Bowen's definition of topological entropy on noncompact sets \cite{RB4}, and they proved the variational principle under some supplementary conditions. Moreover, the variational principle for topological pressure was obtained by so many authors in other situations, see \cite{BL,BL1,CYFDHW,HWYXZG,HWYY,MMA,MAAAAAA,OJMPD,ZYCW,ZGGGG}.
Note that the notions of the topological pressure, topological entropy, variational principle, equilibrium states, and Gibbs measures that are the fundamental notions in thermodynamic formalism play a fundamental role in statistical mechanics, ergodic theory, and dynamical systems, see \cite{RB4,VO,PW}.

In 2008, Huang et al. \cite{HWZ} extended the notion of topological pressure to non-autonomous
dynamical systems, by using open covers, separated sets, and spanning sets. Recently, Kawan \cite{K2} introduced the notion of measure-theoretic pressure for non-autonomous dynamical systems and show that it is related via a variational inequality to the topological pressure. Moreover, Nazarian Sarkooh and Ghane \cite{JNSFHG} studied the regularity of the topological pressure function. Nevertheless, there is nothing provided any full variational principle of topological pressure for non-autonomous dynamical systems.

Motivated by Feng and Huang \cite{DFWH} and Ma and Wen \cite{MJHWZY}, where the authors studied the relations between the Bowen topological entropy on an arbitrary subset and the measure-theoretic lower entropy of Borel probability measures in classical dynamical systems, we define and study the Bowen topological entropy (weighted Bowen topological entropy) on an arbitrary subset and the measure-theoretic lower entropy of a Borel probability measure for non-autonomous dynamical systems. Then, we prove a Variational principle for topological entropy which links the Bowen topological entropy (weighted Bowen topological entropy) on an arbitrary nonempty compact subset to the measure-theoretic lower entropy of Borel probability measures for non-autonomous dynamical systems. Moreover, we show that the Bowen topological entropy (weighted Bowen topological entropy) is determined by the measure-theoretic lower entropy of Borel probability measures.
Also, motivated by Tang et al. recent work \cite{TXCWCZY}, where the authors studied the relations between the
Pesin-Pitskel topological pressure on an arbitrary subset and the measure-theoretic pressure of Borel probability measures in classical dynamical systems, we define and study the Pesin-Pitskel topological pressure (weighted topological pressure) on an arbitrary subset and the measure-theoretic pressure of a Borel probability measure for non-autonomous dynamical systems. Then, we prove a Variational principle for topological pressure which links the Pesin-Pitskel topological pressure (weighted topological pressure) on an arbitrary nonempty compact subset to the measure-theoretic pressure of Borel probability measures for non-autonomous dynamical systems. Moreover, we show that Pesin-Pitskel topological pressure (weighted topological pressure) is determined by measure-theoretic pressure of Borel probability measures.
 
\textbf{This paper is organized as follows.} In Section \ref{section2}, we give a precise definition of a
non-autonomous dynamical system and review the main concepts and main results. The proof of the main results and related propositions are given in Sections \ref{section3} and \ref{section4}.
\section{Definitions and the statement of main results}\label{section2}
A \emph{non-autonomous} or \emph{time-dependent} dynamical system (an \emph{NDS} for short), is a pair $(X_{1,\infty}, f_{1,\infty})$, where $X_{1,\infty}=(X_n)_{n=1}^\infty$ is a sequence of sets
and $f_{1,\infty}=(f_n)_{n=1}^\infty$ is a sequence of maps $f_n: X_n \to X_{n+1}$.
If all the sets $X_n$ are compact metric spaces and all the $f_n$ are continuous,
we say that $(X_{1,\infty}, f_{1,\infty})$ is a \emph{topological NDS}.
Here, we assume that $X$ is a compact metric space with metric $d$, all the sets $X_{n}$ are equal to the set $X$ and we abbreviate $(X_{1,\infty}, f_{1,\infty})$ by $(X, f_{1,\infty})$. Throughout this paper, we work with topological NDSs and use NDS instead of topological NDS for simplicity.
The time evolution of the system is defined by composing the maps $f_{n}$ in an obvious way.
In general, we define
\begin{equation*}
f_i^n:=f_{i+n-1}\circ\cdots\circ f_{i+1}\circ f_i \ \ \text{for} \ i,n\in \mathbb{N},\ \text{and} \ f_i^0:=\text{id}_X.
\end{equation*}
We also put $f_i^{-n}:=(f_i^n)^{-1}$, which is only applied to subsets $A \subset X$.
The \emph{trajectory} of a point $x \in X$ is the sequence $(f_1^n(x))_{n=0}^\infty$.

Fix an NDS $(X, f_{1,\infty})$ on a compact metric space $(X,d)$, then the \emph{Bowen-metrics} on $X$ are given by
\begin{equation*}\label{eqq1}
d_{i,n}(x,y):=\max_{0\leq j<n}d(f_{i}^{j}(x),f_{i}^{j}(y))\ \text{for}\ i,n\geq 1\ \text{and}\ x,y\in X.
\end{equation*}
Also, for any $i,n\geq1$, $x\in X$, and $\epsilon >0$, we define
\begin{equation*}\label{eqq2}
B(x,i,n,\epsilon):=\{y \in X: d_{i,n}(x,y)<\epsilon\},
\end{equation*}
which is called a \emph{dynamical} $n$-\emph{ball} with initial time $i$. In particular, denote $B(x,1,n,\epsilon)$ and $d_{1,n}$ by $B_{n}(x,\epsilon)$ and $d_{n}$, respectively.

Let $\mathcal{C}(X,\mathbb{R})$ denote the Banach space of all continuous real-valued functions on $X$ equipped with the supremum norm, then for $\psi\in\mathcal{C}(X,\mathbb{R})$ and $i,n\in\mathbb{N}$, denote
$\Sigma_{j=0}^{n-1}\psi(f_{i}^{j}(x))$ by $S_{i,n}\psi(x)$. Also, for subset $U$ of $X$,
put $S_{i,n}\psi(U):=\sup_{x\in U}S_{i,n}\psi(x)$.  In particular, let
$ S_{i,n}\psi(x,\epsilon):=S_{i,n}\psi(B(x,i,n,\epsilon)) $.
\subsection{Pesin-Pitskel topological pressure and Bowen topological entropy}
Let $(X, f_{1,\infty})$ be an NDS on a compact metric space $(X,d)$ and $Z$ be a nonempty subset of $X$. Given $n\in\mathbb{N}$, $\alpha\in\mathbb{R}$, $\epsilon>0$, and $\psi\in\mathcal{C}(X,\mathbb{R})$, define
\begin{equation}\label{eqq3}
M_{f_{1,\infty}}(n,\alpha,\epsilon,Z,\psi):=\inf\bigg\{\sum_{i}\text{e}^{-\alpha n_{i}+S_{1,n_{i}}\psi(x_{i},\epsilon)}:Z\subseteq\bigcup_{i}B_{n_{i}}(x_{i},\epsilon)\bigg\},
\end{equation}
where the infimum is taken over all finite or countable collections of $\{B_{n_{i}}(x_{i},\epsilon)\}_{i}$ such that
$x_{i}\in X$, $n_{i}\geq n$, and $Z\subseteq\bigcup_{i}B_{n_{i}}(x_{i},\epsilon)$. The quantity $M_{f_{1,\infty}}(n,\alpha,\epsilon,Z,\psi)$ is monotone in $n$, hence the following limit exists:
\begin{equation}\label{eqq11}
M_{f_{1,\infty}}(\alpha,\epsilon,Z,\psi):=\lim_{n\to\infty}M_{f_{1,\infty}}(n,\alpha,\epsilon,Z,\psi).
\end{equation}
By the construction of Carath\'{e}odory dimension characteristics (see \cite{PYB}), when $\alpha$ goes from
$-\infty$ to $\infty$, the quantity $M_{f_{1,\infty}}(\alpha,\epsilon,Z,\psi)$ 
 jumps from $\infty$ to $0$ at a unique critical value. 
Hence we can define the number
\begin{equation*}
P_{f_{1,\infty}}^{B}(\epsilon,Z,\psi):=\sup\{\alpha:M_{f_{1,\infty}}(\alpha,\epsilon,Z,\psi)=\infty\}=\inf\{\alpha:M_{f_{1,\infty}}(\alpha,\epsilon,Z,\psi)=0\}.
\end{equation*}
\begin{definition}[Pesin-Pitskel topological pressure]\label{deff1}
Let $(X, f_{1,\infty})$ be an NDS on a compact metric space $(X,d)$, $Z$ be a nonempty subset of $X$ and $\psi\in\mathcal{C}(X,\mathbb{R})$. Then, the \emph{Pesin-Pitskel topological pressure} of NDS $(X,f_{1,\infty})$ on the set $Z$ with respect to $\psi$ is defined as

\begin{equation*}
P_{f_{1,\infty}}^{B}(Z,\psi):=\lim_{\epsilon\to 0} P_{f_{1,\infty}}^{B}(\epsilon,Z,\psi).
\end{equation*}
\end{definition}
\begin{definition}[Bowen topological entropy]
In Definition \ref{deff1}, 
if $\psi=0$ then we have the definition of \emph{Bowen topological entropy} of NDS $(X,f_{1,\infty})$ on the set $Z$, which we denote by $h_{top}^{B}(f_{1,\infty},Z)$, i.e.,
\begin{equation*}
h_{top}^{B}(f_{1,\infty},Z):=P_{f_{1,\infty}}^{B}(Z,0).
\end{equation*}
\end{definition}
The Pesin-Pitskel topological pressure and Bowen topological entropy can be defined in the following alternative way, see \cite{BL} or \cite{PYB} for more details. Let $(X, f_{1,\infty})$ be an NDS on a compact metric space $(X,d)$, $\psi\in\mathcal{C}(X,\mathbb{R})$, and $\mathcal{U}$ be a finite open cover of $X$. Denote the diameter of the open cover by $|\mathcal{U}|:=\max\{\text{diam}(U): U\in\mathcal{U}\}$. For $n\in\mathbb{N}$ we denote 
by $\mathcal{W}_{n}(\mathcal{U})$ the collection of strings $ \textbf{U}=U_{1}U_{2}\ldots U_{n} $ with $ U_{i}\in\mathcal{U} $. For $\textbf{U}\in\mathcal{W}_{n}(\mathcal{U})$
we call the integer $ m(\textbf{U})=n $ the length of $ \textbf{U} $ and define
\begin{equation*}
X(\textbf{U})=U_{1}\cap f_{1}^{-1}(U_{2})\cap\cdots\cap f_{1}^{-(n-1)}(U_{n})=\{x\in X: f_{1}^{j-1}(x)\in U_{j}\ \text{for}\ j=1,\ldots,n\}.
\end{equation*}
Let $Z\subseteq X$. We say that $ \Lambda\subset\bigcup_{n\geq 1}\mathcal{W}_{n}(\mathcal{U}) $ covers $ Z $ if $ \bigcup_{\textbf{U}\in\Lambda}X(\textbf{U})\supset Z $. For $ \alpha\in\mathbb{R} $ and $N\in\mathbb{N}$, define
\begin{equation*}
M_{f_{1,\infty}}^{\alpha}(\mathcal{U},N,Z,\psi):=\inf_{\Lambda}\bigg\{\sum_{\textbf{U}\in\Lambda}\text{e}^{-\alpha m(\textbf{U})+S_{1,m(\textbf{U})}\psi(X(\textbf{U}))}\bigg\},
\end{equation*}
where the infimum is taken over all $ \Lambda\subset\bigcup_{n\geq N}\mathcal{W}_{n}(\mathcal{U}) $ that cover $ Z $ and $ S_{1,m(\textbf{U})}\psi(X(\textbf{U}))=-\infty $ if $ X(\textbf{U})=\emptyset $. Clearly $ M_{f_{1,\infty}}^{\alpha}(\mathcal{U},N,.,\psi) $ is a finite outer measure on $ X $, and
\begin{equation*}
M_{f_{1,\infty}}^{\alpha}(\mathcal{U},N,Z,\psi)=\inf\{M_{f_{1,\infty}}^{\alpha}(\mathcal{U},N,G,\psi): G\supset Z\ \text{and}\ G\ \text{is open}\}.
\end{equation*}
Note that the quantity $ M_{f_{1,\infty}}^{\alpha}(\mathcal{U},N,Z,\psi) $ increases as $ N $ increases. Define
\begin{equation*}
M_{f_{1,\infty}}^{\alpha}(\mathcal{U},Z,\psi):=\lim_{N\to\infty}M_{f_{1,\infty}}^{\alpha}(\mathcal{U},N,Z,\psi)
\end{equation*}
and
\begin{equation*}
P_{f_{1,\infty}}^{B}(\mathcal{U},Z,\psi):=\sup\{\alpha:M_{f_{1,\infty}}^{\alpha}(\mathcal{U},Z,\psi)=\infty\}=\inf\{\alpha:M_{f_{1,\infty}}^{\alpha}(\mathcal{U},Z,\psi)=0\}.
\end{equation*}
Then
\begin{equation*}
P_{f_{1,\infty}}^{B}(Z,\psi)=\sup_{\mathcal{U}}P_{f_{1,\infty}}^{B}(\mathcal{U},Z,\psi)\ \ \text{and}\ \
h_{top}^{B}(f_{1,\infty},Z)=\sup_{\mathcal{U}}P_{f_{1,\infty}}^{B}(\mathcal{U},Z,0),
\end{equation*}
where $\mathcal{U}$ runs over finite open covers of $ Z $. It is known that
\begin{equation*}
\sup_{\mathcal{U}}P_{f_{1,\infty}}^{B}(\mathcal{U},Z,\psi)=\lim_{|\mathcal{U}|\to 0}P_{f_{1,\infty}}^{B}(\mathcal{U},Z,\psi).
\end{equation*}
\begin{remark}\label{remarkk1}
If we replace $S_{1,n_{i}}\psi(x_{i},\epsilon)$ 
by $S_{1,n_{i}}\psi(x_{i})$ 
in equation (\ref{eqq3}), 
then we can define new functions $\mathcal{M}_{f_{1,\infty}}(n,\alpha,\epsilon,Z,\psi)$ and
$\mathcal{M}_{f_{1,\infty}}(\alpha,\epsilon,Z,\psi)$
as (\ref{eqq3}) and
(\ref{eqq11}).
Moreover, we denote the critical value of
$\mathcal{M}_{f_{1,\infty}}(\alpha,\epsilon,Z,\psi)$
by
$\mathcal{P}_{f_{1,\infty}}^{B}(\epsilon,Z,\psi)$.
\end{remark}
\subsection{Weighted topological pressure and weighted Bowen topological entropy}
Let $(X, f_{1,\infty})$ be an NDS on a compact metric space $(X,d)$, $Z$ be a nonempty subset of $X$ and $\psi\in\mathcal{C}(X,\mathbb{R})$. For any $n\in\mathbb{N}$, $\alpha\in\mathbb{R}$, $\epsilon>0$, and any bounded function $g:X\to\mathbb{R}$, define
\begin{equation}\label{eqq7}
\mathcal{W}_{f_{1,\infty}}(n,\alpha,\epsilon,g,\psi):=\inf\bigg\{\sum_{i}c_{i}\text{e}^{-\alpha n_{i}+S_{1,n_{i}}\psi(x_{i},\epsilon)}\bigg\},
\end{equation}
where the infimum is taken over all finite or countable families of $\{B_{n_{i}}(x_{i},\epsilon),c_{i}\}$ such that
$0<c_{i}<\infty$, $x_{i}\in X$, $n_{i}\geq n$ for all $i$, and $\sum_{i}c_{i}\chi_{B_{i}}\geq g$,
where $B_{i}:=B_{n_{i}}(x_{i},\epsilon)$ and $\chi_{B}$ denotes the characteristic function on a subset $B$ of $X$. 
For $Z\subseteq X$, let $g=\chi_{Z}$ and
\begin{equation*}
\mathcal{W}_{f_{1,\infty}}(n,\alpha,\epsilon,Z,\psi):=\mathcal{W}_{f_{1,\infty}}(n,\alpha,\epsilon,\chi_{Z},\psi).
\end{equation*}
The quantity $\mathcal{W}_{f_{1,\infty}}(n,\alpha,\epsilon,Z,\psi)$ does not decrease as $n$ increases, hence the following limit exists:
\begin{equation*}
\mathcal{W}_{f_{1,\infty}}(\alpha,\epsilon,Z,\psi):=\lim_{n\to\infty}\mathcal{W}_{f_{1,\infty}}(n,\alpha,\epsilon,Z,\psi).
\end{equation*}
Clearly, there exists a critical value of the parameter $\alpha$,
which we denote by $P_{f_{1,\infty}}^{\mathcal{W}}(\epsilon,Z,\psi)$, where $\mathcal{W}_{f_{1,\infty}}(\alpha,\epsilon,Z,\psi)$ jumps from $\infty$ to $0$, i.e.,
\begin{equation*}
\mathcal{W}_{f_{1,\infty}}(\alpha,\epsilon,Z,\psi) = \left\{
\begin{array}{rl}
\infty & \text{if } \alpha<P_{f_{1,\infty}}^{\mathcal{W}}(\epsilon,Z,\psi),\\
0 & \text{if } \alpha>P_{f_{1,\infty}}^{\mathcal{W}}(\epsilon,Z,\psi).
\end{array} \right.
\end{equation*}
\begin{definition}[Weighted topological pressure]\label{defnew1}
Let $(X, f_{1,\infty})$ be an NDS on a compact metric space $(X,d)$, $Z$ be a subset of $X$ and $\psi\in\mathcal{C}(X,\mathbb{R})$. Since the quantity $P_{f_{1,\infty}}^{\mathcal{W}}(\epsilon,Z,\psi)$ is monotone with respect to $\epsilon$, the following limit exists:
\begin{equation*}
P_{f_{1,\infty}}^{\mathcal{W}}(Z,\psi):=\lim_{\epsilon\to 0}P_{f_{1,\infty}}^{\mathcal{W}}(\epsilon,Z,\psi).
\end{equation*}
We call $P_{f_{1,\infty}}^{\mathcal{W}}(Z,\psi)$ the \emph{weighted topological pressure} of NDS $(X,f_{1,\infty})$ on the set $Z$ with respect to $\psi$.
\end{definition}
\begin{definition}[Weighted Bowen topological entropy]
In Definition \ref{defnew1}, 
if $\psi=0$ then we have the definition of \emph{weighted Bowen topological entropy} of NDS $(X,f_{1,\infty})$ on the set $Z$, which we denote by $h_{top}^{WB}(f_{1,\infty},Z)$, i.e.,
\begin{equation*}
h_{top}^{WB}(f_{1,\infty},Z):=P_{f_{1,\infty}}^{W}(Z,0).
\end{equation*}
\end{definition}
\subsection{Measure-theoretic pressure and measure-theoretic lower entropy}
Let $(X, f_{1,\infty})$ be an NDS on a compact metric space $(X,d)$, and let $\mathcal{M}(X)$ denote the set of Borel probability measures on $X$. For $\mu\in\mathcal{M}(X)$, $x\in X$, and $\psi\in\mathcal{C}(X,\mathbb{R})$, we define
\begin{equation}\label{eqq5}
P_{\mu,f_{1,\infty}}(x,\psi):=\lim_{\epsilon\to 0}\liminf_{n\to\infty}\dfrac{-\log\mu(B_{n}(x,\epsilon))+S_{1,n}\psi(x)}{n}
\end{equation}
and
\begin{equation}\label{new1}
\underline{h}_{\mu}(f_{1,\infty},x):=\lim_{\epsilon\to 0}\liminf_{n\to\infty}\dfrac{-\log\mu(B_{n}(x,\epsilon))}{n}.
\end{equation}
\begin{definition}[Measure-theoretic pressure and measure-theoretic lower entropy]
Let $(X, f_{1,\infty})$ be an NDS on a compact metric space $(X,d)$, $\psi\in\mathcal{C}(X,\mathbb{R})$, and $\mu\in\mathcal{M}(X)$.
Then, the \emph{measure-theoretic pressure} of NDS $(X, f_{1,\infty})$ with
respect to $\mu$ and $\psi$ is defined as
\begin{equation}\label{eqq6}
P_{\mu,f_{1,\infty}}(X,\psi):=\int P_{\mu,f_{1,\infty}}(x,\psi)\ d\mu(x).
\end{equation}
Also, the \emph{measure-theoretic lower entropy} of NDS $(X, f_{1,\infty})$ with
respect to $\mu$ is defined as
\begin{equation}\label{new2}
\underline{h}_{\mu}(f_{1,\infty}):=\int \underline{h}_{\mu}(f_{1,\infty},x)\ d\mu(x).
\end{equation}
\end{definition}
\subsection{Statement of main results}
Now, we state the main results of this paper and postpone the proofs to the next sections. The first theorem shows that the Pesin-Pitskel topological pressure (weighted topological pressure) is determined by the measure-theoretic pressure of Borel probability measures, which extends \cite[Theorem A]{TXCWCZY} for NDSs.
\begin{theorem A}
Let $(X, f_{1,\infty})$ be an NDS on a compact metric space $(X,d)$, $\psi\in\mathcal{C}(X,\mathbb{R})$, $\mu\in\mathcal{M}(X)$ and $Z$ be a subset of $X$. For $s\in\mathbb{R}$, the following properties hold:
\begin{itemize}
\item[(1)]
If $P_{\mu,f_{1,\infty}}(x,\psi)\leq s$ for all $x\in Z$, then $P_{f_{1,\infty}}^{B}(Z,\psi)\leq s$ and 
$P_{f_{1,\infty}}^{\mathcal{W}}(Z,\psi)\leq s$;
\item[(2)]
If $P_{\mu,f_{1,\infty}}(x,\psi)\geq s$ for all $x\in Z$ and $\mu(Z)>0$, then $P_{f_{1,\infty}}^{B}(Z,\psi)\geq s$ and $P_{f_{1,\infty}}^{\mathcal{W}}(Z,\psi)\geq s$.
\end{itemize}
\end{theorem A}
As a direct consequence of Theorem A, we have the following corollary which shows that the Bowen topological entropy (weighted Bowen topological entropy) is determined by the measure-theoretic lower entropy of Borel probability measures, which extends \cite[Theorem 1]{MJHWZY} for NDSs.
\begin{corollary B}
Let $(X, f_{1,\infty})$ be an NDS on a compact metric space $(X,d)$, $\mu\in\mathcal{M}(X)$ and $Z$ be a subset of $X$. For $s\in\mathbb{R}$, the following properties hold:
\begin{itemize}
\item[(1)]
If $\underline{h}_{\mu}(f_{1,\infty},x)\leq s$ for all $x\in Z$, then $h_{top}^{B}(f_{1,\infty},Z)\leq s$ and $h_{top}^{WB}(f_{1,\infty},Z)\leq s$;
\item[(2)]
If $\underline{h}_{\mu}(f_{1,\infty},x)\geq s$ for all $x\in Z$ and $\mu(Z)>0$, then $h_{top}^{B}(f_{1,\infty},Z)\geq s$ and $h_{top}^{WB}(f_{1,\infty},Z)\geq s$.
\end{itemize}
\end{corollary B}
The next theorem that is the main result of this paper is a variational principle for topological pressure which links the Pesin-Pitskel topological pressure (weighted topological pressure) on an arbitrary nonempty compact subset to the measure-theoretic pressure of Borel probability measures, which extends \cite[Theorem B, part (1)]{TXCWCZY} for NDSs.

\begin{theorem C}
Let $(X, f_{1,\infty})$ be an NDS on a compact metric space $(X,d)$ and $Z$ be a nonempty compact subset of $X$. Then, for $\psi\in\mathcal{C}(X,\mathbb{R})$, 
\begin{equation*} 
P_{f_{1,\infty}}^{B}(Z,\psi)=P_{f_{1,\infty}}^{\mathcal{W}}(Z,\psi)=\sup\{P_{\mu,f_{1,\infty}}(X,\psi):\mu\in\mathcal{M}(X), \mu(Z)=1\}.
\end{equation*}
\end{theorem C}
As a direct consequence of Theorem C, we have the following corollary that is a variational principle for topological entropy which links the Bowen topological entropy (weighted Bowen topological entropy) on an arbitrary nonempty compact subset to the measure-theoretic lower entropy of Borel probability measures, which extends \cite[Theorem 1.2, part (1)]{DFWH} for NDSs.
\begin{corollary D}
Let $(X, f_{1,\infty})$ be an NDS on a compact metric space $(X,d)$ and $Z$ be a nonempty compact subset of $X$. Then, for $\psi\in\mathcal{C}(X,\mathbb{R})$, 
\begin{equation*}
h_{top}^{B}(f_{1,\infty},Z)=h_{top}^{WB}(f_{1,\infty},Z)=\sup\{\underline{h}_{\mu}(f_{1,\infty}):\mu\in\mathcal{M}(X), \mu(Z)=1\}.
\end{equation*}
\end{corollary D}
\section{Proof of theorem A and corollary B}\label{section3}
In this section, we give a few preparatory results needed for the proof of the main
results and then we prove Theorem A and Corollary B.

By Definition \ref{deff1} and Remark \ref{remarkk1}, we have the following theorem.
\begin{theorem}\label{theorem3}
Let $(X, f_{1,\infty})$ be an NDS on a compact metric space $(X,d)$, $Z$ be a subset of $X$ and $\psi\in\mathcal{C}(X,\mathbb{R})$. Then
\begin{equation*}
P_{f_{1,\infty}}^{B}(Z,\psi)=\lim_{\epsilon\to 0} P_{f_{1,\infty}}^{B}(\epsilon,Z,\psi)=\lim_{\epsilon\to 0} \ \ \mathcal{P}^{B}_{f_{1,\infty}}(\epsilon,Z,\psi).
\end{equation*}
\end{theorem}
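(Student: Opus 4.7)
The first equality is the definition of $P_{f_{1,\infty}}^{B}(Z,\psi)$ given in Definition \ref{deff1} (together with the fact that $P_{f_{1,\infty}}^{B}(\epsilon,Z,\psi)$ is monotone in $\epsilon$, so the limit exists). The substantive content is the second equality, which asserts that replacing $S_{1,n_i}\psi(x_i,\epsilon)=\sup_{y\in B_{n_i}(x_i,\epsilon)}S_{1,n_i}\psi(y)$ by the pointwise value $S_{1,n_i}\psi(x_i)$ in the definition of $M_{f_{1,\infty}}(n,\alpha,\epsilon,Z,\psi)$ yields the same limit as $\epsilon\to 0$. My plan is to compare the two versions at a fixed scale $\epsilon$ using the uniform continuity of $\psi$ on the compact space $X$.

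Concretely, given $\eta>0$, uniform continuity of $\psi$ produces $\epsilon_0>0$ such that $|\psi(u)-\psi(v)|<\eta$ whenever $d(u,v)<\epsilon_0$. For any $\epsilon<\epsilon_0$, any $x\in X$, any $n\in\mathbb{N}$, and any $y\in B_{n}(x,\epsilon)$, the definition of $d_{n}$ forces $d(f_{1}^{j}(x),f_{1}^{j}(y))<\epsilon$ for $0\leq j<n$, hence
\begin{equation*}
S_{1,n}\psi(x)\;\leq\; S_{1,n}\psi(x,\epsilon)\;\leq\; S_{1,n}\psi(x)+n\eta.
\end{equation*}
Multiplying by $-1$ inside the exponent in \eqref{eqq3} and its Remark \ref{remarkk1} analogue, this sandwich translates termwise into
\begin{equation*}
\mathrm{e}^{-\alpha n+S_{1,n}\psi(x)}\;\leq\;\mathrm{e}^{-\alpha n+S_{1,n}\psi(x,\epsilon)}\;\leq\;\mathrm{e}^{-(\alpha-\eta)n+S_{1,n}\psi(x)}.
\end{equation*}
Summing over any admissible cover and taking infima gives
\begin{equation*}
\mathcal{M}_{f_{1,\infty}}(n,\alpha,\epsilon,Z,\psi)\;\leq\;M_{f_{1,\infty}}(n,\alpha,\epsilon,Z,\psi)\;\leq\;\mathcal{M}_{f_{1,\infty}}(n,\alpha-\eta,\epsilon,Z,\psi),
\end{equation*}
and letting $n\to\infty$ yields the same inequalities for $\mathcal{M}_{f_{1,\infty}}(\alpha,\epsilon,Z,\psi)$ and $M_{f_{1,\infty}}(\alpha,\epsilon,Z,\psi)$.

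From the two-sided bound on the Carath\'eodory-type outer measures I read off the corresponding bound on their critical values: the left inequality forces $\mathcal{P}^{B}_{f_{1,\infty}}(\epsilon,Z,\psi)\leq P_{f_{1,\infty}}^{B}(\epsilon,Z,\psi)$, while the right inequality, by the standard jump-at-critical-value argument (if $\alpha>\mathcal{P}^{B}_{f_{1,\infty}}(\epsilon,Z,\psi)+\eta$ then $\alpha-\eta$ already kills $\mathcal{M}_{f_{1,\infty}}$, so $M_{f_{1,\infty}}(\alpha,\epsilon,Z,\psi)=0$), gives $P_{f_{1,\infty}}^{B}(\epsilon,Z,\psi)\leq \mathcal{P}^{B}_{f_{1,\infty}}(\epsilon,Z,\psi)+\eta$. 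Hence for all $\epsilon<\epsilon_0$,
\begin{equation*}
\mathcal{P}^{B}_{f_{1,\infty}}(\epsilon,Z,\psi)\;\leq\;P_{f_{1,\infty}}^{B}(\epsilon,Z,\psi)\;\leq\;\mathcal{P}^{B}_{f_{1,\infty}}(\epsilon,Z,\psi)+\eta.
\end{equation*}

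Finally, taking $\epsilon\to 0$ (both monotone limits exist because shrinking $\epsilon$ only refines the admissible covers, and the two critical-value functions of $\epsilon$ are monotone non-decreasing as $\epsilon\downarrow 0$) gives
\begin{equation*}
\lim_{\epsilon\to 0}\mathcal{P}^{B}_{f_{1,\infty}}(\epsilon,Z,\psi)\;\leq\;\lim_{\epsilon\to 0}P_{f_{1,\infty}}^{B}(\epsilon,Z,\psi)\;\leq\;\lim_{\epsilon\to 0}\mathcal{P}^{B}_{f_{1,\infty}}(\epsilon,Z,\psi)+\eta,
\end{equation*}
and since $\eta>0$ was arbitrary the two limits coincide, completing the proof. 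The only mildly delicate point is the passage from the two-sided comparison of $\mathcal{M}$ and $M$ to the comparison of critical values, which is a standard feature of the Carath\'eodory construction but worth spelling out once; everything else is bookkeeping powered by the uniform continuity of $\psi$.
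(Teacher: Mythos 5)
Your proof is correct and follows essentially the same route as the paper: both arguments rest on the elementary sandwich $S_{1,n}\psi(x)\leq S_{1,n}\psi(x,\epsilon)\leq S_{1,n}\psi(x)+n\,\lambda(\epsilon)$ coming from uniform continuity of $\psi$ on the compact space $X$, which shifts the critical value by at most the modulus of continuity; the paper phrases the shift as $\lambda(\epsilon)\to 0$ while you fix $\eta$ and a threshold $\epsilon_0$, but these are the same estimate. The passage from the comparison of the outer measures to the comparison of the critical values, which you rightly flag as the one point worth spelling out, is handled identically in the paper.
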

\begin{proof}
Obviously, $P_{f_{1,\infty}}^{B}(Z,\psi)\geq\lim_{\epsilon\to 0} \mathcal{P}^{B}_{f_{1,\infty}}(\epsilon,Z,\psi)$. We shall prove the converse inequality. Hence, let
\begin{equation*}
\lambda(\epsilon):=\sup\{|\psi(x)-\psi(y)|: d(x,y)<2\epsilon\}.
\end{equation*}
Then, for any $n\in\mathbb{N}$, we have 
\begin{equation*}
S_{1,n}\psi(x,\epsilon)-n\lambda(\epsilon)<S_{1,n}\psi(x).
\end{equation*}
This implies that
\begin{eqnarray*}
\mathcal{M}_{f_{1,\infty}}(n,\alpha,\epsilon,Z,\psi)
&= &\inf\bigg\{\sum_{i}\text{e}^{-\alpha n_{i}+S_{1,n_{i}}\psi(x_{i})}:Z\subseteq\bigcup_{i}B_{n_{i}}(x_{i},\epsilon)\bigg\}\\
&\geq & \inf\bigg\{\sum_{i}\text{e}^{-\alpha n_{i}+S_{1,n_{i}}\psi(x_{i},\epsilon)-n_{i}\lambda(\epsilon)}:Z\subseteq\bigcup_{i}B_{n_{i}}(x_{i},\epsilon)\bigg\}\\
&= & \inf\bigg\{\sum_{i}\text{e}^{-(\alpha+\lambda(\epsilon)) n_{i}+S_{1,n_{i}}\psi(x_{i},\epsilon)}:Z\subseteq\bigcup_{i}B_{n_{i}}(x_{i},\epsilon)\bigg\}\\
&= & M_{f_{1,\infty}}(n,\alpha+\lambda(\epsilon),\epsilon,Z,\psi),
\end{eqnarray*}
where the infimums are taken over all finite or countable collections of $\{B_{n_{i}}(x_{i},\epsilon)\}_{i}$ such that
$x_{i}\in X$, $n_{i}\geq n$, and $Z\subseteq\bigcup_{i}B_{n_{i}}(x_{i},\epsilon)$. Hence
\begin{equation*}
\mathcal{M}_{f_{1,\infty}}(\alpha,\epsilon,Z,\psi)\geq M_{f_{1,\infty}}(\alpha+\lambda(\epsilon),\epsilon,Z,\psi).
\end{equation*}
It follows that
\begin{equation*}
\mathcal{P}^{B}_{f_{1,\infty}}(\epsilon,Z,\psi)\geq P_{f_{1,\infty}}^{B}(\epsilon,Z,\psi)-\lambda(\epsilon).
\end{equation*}
Now, we obtain the desired inequality as $\epsilon\to 0$, because $X$ is compact and so $\psi$ is uniformly continuous.
\end{proof}
By the construction of Carath\'{e}odory dimension characteristics (see \cite{BL,PYB}), we have the following proposition that collects some properties of the pressures and the entropies of NDSs which will be used in the proof of the main results.
\begin{proposition}\label{theorem2}
Let $(X, f_{1,\infty})$ be an NDS on a compact metric space $(X,d)$, $\psi\in\mathcal{C}(X,\mathbb{R})$ and 
$Z$, $Z_{1}$ and $Z_{2}$ be subsets of $X$. Then,
\begin{enumerate}
\item[(a)]
if $Z_{1}\subseteq Z_{2}$, then we have $P^{B}_{f_{1,\infty}}(Z_{1},\psi)\leq P^{B}_{f_{1,\infty}}(Z_{2},\psi)$, $P^{\mathcal{W}}_{f_{1,\infty}}(Z_{1},\psi)\leq P^{\mathcal{W}}_{f_{1,\infty}}(Z_{2},\psi)$, $h_{top}^{B}(f_{1,\infty},Z_{1})\leq h_{top}^{B}(f_{1,\infty},Z_{2})$, and $h_{top}^{WB}(f_{1,\infty},Z_{1})\leq h_{top}^{WB}(f_{1,\infty},Z_{2})$;
\item[(b)]
if $Z=\bigcup_{i\in I}Z_{i}$, with $I$ at most countable, then
\begin{itemize}
\item[(1)]
$P_{f_{1,\infty}}^{B}(Z,\psi)=\sup_{i\in I}P_{f_{1,\infty}}^{B}(Z_{i},\psi)$,
\item[(2)]
$P_{f_{1,\infty}}^{\mathcal{W}}(Z,\psi)=\sup_{i\in I}P_{f_{1,\infty}}^{\mathcal{W}}(Z_{i},\psi)$,
\item[(3)]
$h_{top}^{B}(f_{1,\infty},Z)=\sup_{i\in I}h_{top}^{B}(f_{1,\infty},Z_{i})$,
\item[(3)]
$h_{top}^{WB}(f_{1,\infty},Z)=\sup_{i\in I}h_{top}^{WB}(f_{1,\infty},Z_{i})$.
\end{itemize}
\end{enumerate}
\end{proposition}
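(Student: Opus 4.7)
The plan is to prove both parts of Proposition~\ref{theorem2} by exploiting that the set functions $Z\mapsto M_{f_{1,\infty}}(n,\alpha,\epsilon,Z,\psi)$ and $Z\mapsto \mathcal{W}_{f_{1,\infty}}(n,\alpha,\epsilon,Z,\psi)$ are monotone in $Z$ and, more importantly, countably subadditive at every fixed scale $(n,\alpha,\epsilon)$. This is the standard behaviour of Carath\'eodory dimension characteristics as developed in \cite{BL,PYB}, and the non-autonomous setting introduces no new obstruction, since the constraint ``$n_{i}\geq n$'' on admissible Bowen-ball covers is closed under countable unions.

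For part~(a), I would verify monotonicity directly from the defining infima. If $Z_{1}\subseteq Z_{2}$, every Bowen-ball family covering $Z_{2}$ also covers $Z_{1}$, so the infimum defining $M_{f_{1,\infty}}(n,\alpha,\epsilon,Z_{1},\psi)$ is taken over a collection at least as large as the one defining $M_{f_{1,\infty}}(n,\alpha,\epsilon,Z_{2},\psi)$. In the weighted case, any family $\{(B_{i},c_{i})\}_{i}$ with $\sum_{i}c_{i}\chi_{B_{i}}\geq \chi_{Z_{2}}$ automatically satisfies $\sum_{i}c_{i}\chi_{B_{i}}\geq \chi_{Z_{1}}$. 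Monotonicity therefore passes to the critical values $P_{f_{1,\infty}}^{B}(\epsilon,\cdot,\psi)$ and $P_{f_{1,\infty}}^{\mathcal{W}}(\epsilon,\cdot,\psi)$, and then to the limits as $\epsilon\to 0$; the two entropy assertions are the special case $\psi\equiv 0$.

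For part~(b), the inequality $\sup_{i\in I}P_{f_{1,\infty}}^{B}(Z_{i},\psi)\leq P_{f_{1,\infty}}^{B}(Z,\psi)$ is immediate from~(a). For the reverse inequality, fix $\alpha>\sup_{i\in I}P_{f_{1,\infty}}^{B}(Z_{i},\psi)$ and $\epsilon>0$, and enumerate $I=\{i_{k}\}_{k\geq 1}$. Since $P_{f_{1,\infty}}^{B}(\epsilon,Z_{i_{k}},\psi)\leq P_{f_{1,\infty}}^{B}(Z_{i_{k}},\psi)<\alpha$ for every $k$, one has $M_{f_{1,\infty}}(\alpha,\epsilon,Z_{i_{k}},\psi)=0$. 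Given $\delta>0$ and $n\in\mathbb{N}$, for each $k$ I would choose a Bowen-ball cover $\{B_{n_{k,j}}(x_{k,j},\epsilon)\}_{j}$ of $Z_{i_{k}}$ with $n_{k,j}\geq n$ and sum in \eqref{eqq3} below $\delta\cdot 2^{-k}$; the countable union is a cover of $Z$ witnessing $M_{f_{1,\infty}}(n,\alpha,\epsilon,Z,\psi)\leq\delta$, so $M_{f_{1,\infty}}(\alpha,\epsilon,Z,\psi)=0$ and $P_{f_{1,\infty}}^{B}(\epsilon,Z,\psi)\leq\alpha$. Letting $\epsilon\to 0$ and then $\alpha\downarrow\sup_{i}P_{f_{1,\infty}}^{B}(Z_{i},\psi)$ finishes the Pesin--Pitskel case. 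For the weighted statement the argument is identical after replacing covers by weighted families: combining families $\{(B_{k,j},c_{k,j})\}_{j}$ with $\sum_{j}c_{k,j}\chi_{B_{k,j}}\geq\chi_{Z_{i_{k}}}$ and $\mathcal{W}$-sum below $\delta\cdot 2^{-k}$ produces a countable family with $\sum_{k,j}c_{k,j}\chi_{B_{k,j}}\geq\sum_{k}\chi_{Z_{i_{k}}}\geq\chi_{Z}$ and total weight below $\delta$. Both entropy statements then drop out by specialising to $\psi\equiv 0$. I do not expect any serious obstacle; the only delicate point is to strictly separate $\alpha$ from $P_{f_{1,\infty}}^{B}(\epsilon,Z_{i},\psi)$ for \emph{every} $i$ and $\epsilon$ simultaneously before summing the $\delta\cdot 2^{-k}$-estimates, which is guaranteed by the monotonicity of $P_{f_{1,\infty}}^{B}(\epsilon,Z_{i},\psi)$ in $\epsilon$ established in~(a).
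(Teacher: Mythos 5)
The paper does not actually prove this proposition: it is stated as an instance of the general theory of Carath\'eodory dimension characteristics, with a citation to \cite{BL,PYB}. Your proposal supplies the standard argument that those references contain --- monotonicity of the defining infima in $Z$, plus countable subadditivity at each fixed scale $(n,\alpha,\epsilon)$ obtained by splicing together near-optimal covers (or weighted families) with error $\delta 2^{-k}$ --- and the overall structure is right; in particular your observation that the constraint $n_i\geq n$ is preserved under countable unions of covers is exactly the point that makes the argument go through uniformly in $n$.

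There is, however, one step you should repair. In part (b) you pass from $\alpha>\sup_i P^{B}_{f_{1,\infty}}(Z_i,\psi)$ to $M_{f_{1,\infty}}(\alpha,\epsilon,Z_{i_k},\psi)=0$ for \emph{every} $\epsilon>0$ via the inequality $P^{B}_{f_{1,\infty}}(\epsilon,Z_{i_k},\psi)\leq P^{B}_{f_{1,\infty}}(Z_{i_k},\psi)$, and you attribute this to part (a). Part (a) gives monotonicity in the set, not in $\epsilon$, and for the quantity $M_{f_{1,\infty}}$ as defined in (\ref{eqq3}) --- whose weights $S_{1,n_i}\psi(x_i,\epsilon)$ themselves depend on $\epsilon$ --- monotonicity in $\epsilon$ is not immediate: shrinking $\epsilon$ makes covers harder but also shrinks the weights, so $P^{B}_{f_{1,\infty}}(\epsilon,Z,\psi)$ can a priori exceed the limit $P^{B}_{f_{1,\infty}}(Z,\psi)$ by up to $\lambda(\epsilon)=\sup\{|\psi(x)-\psi(y)|:d(x,y)<2\epsilon\}$. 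The clean fix, consistent with how the paper itself argues in the proof of Theorem A, is to run your argument with the variant $\mathcal{M}_{f_{1,\infty}}$ of Remark \ref{remarkk1}, whose weights $S_{1,n_i}\psi(x_i)$ do not depend on $\epsilon$; then $\mathcal{P}^{B}_{f_{1,\infty}}(\epsilon,Z_i,\psi)$ is genuinely nondecreasing as $\epsilon\downarrow 0$, hence bounded by $P^{B}_{f_{1,\infty}}(Z_i,\psi)$ uniformly in $i$ and $\epsilon$, and Theorem \ref{theorem3} converts the conclusion back to $P^{B}_{f_{1,\infty}}$. Alternatively, keep $M_{f_{1,\infty}}$ but carry the $\lambda(\epsilon)$ correction, which vanishes as $\epsilon\to 0$. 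The same caveat applies to the weighted case, where the paper simply asserts monotonicity of $P^{\mathcal{W}}_{f_{1,\infty}}(\epsilon,Z,\psi)$ in $\epsilon$ without proof. With this adjustment your proof is complete.
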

In the following lemma, we study some properties of the functions $P_{\mu,f_{1,\infty}}(x,\psi)$, $\underline{h}_{\mu}(f_{1,\infty},x)$, $P_{\mu,f_{1,\infty}}(X,\psi)$, and $\underline{h}_{\mu}(f_{1,\infty})$ given by equations (\ref{eqq5}), (\ref{new1}), (\ref{eqq6}), and (\ref{new2}), respectively.

\begin{lemma}
Let $(X, f_{1,\infty})$ be an NDS on a compact metric space $(X,d)$, $\psi\in\mathcal{C}(X,\mathbb{R})$, $x\in X$ and $\mu\in\mathcal{M}(X)$. Then, $P_{\mu,f_{1,\infty}}(x,\psi)$ is integrable and $P_{\mu,f_{1,\infty}}(X,\psi)$ is well defined. In particular, $\underline{h}_{\mu}(f_{1,\infty},x)$ is integrable and $\underline{h}_{\mu}(f_{1,\infty})$ is well defined.
\end{lemma}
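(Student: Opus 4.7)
The plan is to verify, in order, (i) Borel measurability of the integrand $x \mapsto P_{\mu,f_{1,\infty}}(x,\psi)$, (ii) a uniform lower bound on this integrand depending only on $\|\psi\|_\infty$, and (iii) conclude that the integral defining $P_{\mu,f_{1,\infty}}(X,\psi)$ makes sense in $(-\infty,+\infty]$. The entropic statements for $\underline{h}_{\mu}(f_{1,\infty},x)$ and $\underline{h}_{\mu}(f_{1,\infty})$ will then follow by specializing $\psi\equiv 0$, where in fact the lower bound becomes $0$ so the integral lies in $[0,+\infty]$.

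For step (i), the core observation is that $x\mapsto \mu(B_{n}(x,\epsilon))$ is lower semi-continuous on $X$ for each fixed $n\in\mathbb{N}$ and $\epsilon>0$. Indeed, the Bowen-metric $d_{1,n}(x,y)=\max_{0\le j<n} d(f_{1}^{j}(x),f_{1}^{j}(y))$ is continuous on $X\times X$, so for any sequence $x_{k}\to x$ and any $y$ with $d_{1,n}(x,y)<\epsilon$ we eventually have $d_{1,n}(x_{k},y)<\epsilon$; hence $\liminf_{k}\chi_{B_{n}(x_{k},\epsilon)}(y)\ge\chi_{B_{n}(x,\epsilon)}(y)$, and Fatou's lemma gives $\liminf_{k}\mu(B_{n}(x_{k},\epsilon))\ge\mu(B_{n}(x,\epsilon))$. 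Thus $x\mapsto-\log\mu(B_{n}(x,\epsilon))$ is upper semi-continuous (allowing the value $+\infty$), hence Borel measurable; and $x\mapsto S_{1,n}\psi(x)$ is continuous. Taking $\liminf_{n\to\infty}$ preserves measurability. Finally, since $B_{n}(x,\epsilon)$ shrinks as $\epsilon\downarrow 0$, the quantity $\tfrac{1}{n}(-\log\mu(B_{n}(x,\epsilon))+S_{1,n}\psi(x))$ is nondecreasing as $\epsilon\downarrow 0$; hence its $\liminf$ in $n$ is also nondecreasing in $\epsilon^{-1}$, so the outer limit $\lim_{\epsilon\to 0}$ can be computed along $\epsilon=1/k$ as a countable supremum, giving measurability of $P_{\mu,f_{1,\infty}}(\cdot,\psi)$.

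For step (ii), note $\mu(B_{n}(x,\epsilon))\le 1$, so $-\log\mu(B_{n}(x,\epsilon))\ge 0$; and $S_{1,n}\psi(x)\ge -n\|\psi\|_{\infty}$. Therefore
\begin{equation*}
\frac{-\log\mu(B_{n}(x,\epsilon))+S_{1,n}\psi(x)}{n}\ge -\|\psi\|_{\infty}
\end{equation*}
uniformly in $x,n,\epsilon$, and passing to $\liminf_{n}$ and $\lim_{\epsilon\to 0}$ yields $P_{\mu,f_{1,\infty}}(x,\psi)\ge -\|\psi\|_{\infty}$ for every $x\in X$. Combined with measurability, this guarantees that $\int P_{\mu,f_{1,\infty}}(x,\psi)\,d\mu(x)$ is a well-defined element of $[-\|\psi\|_{\infty},+\infty]$, which is what the lemma asserts by ``integrable'' and ``well defined''. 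The entropic case is obtained by setting $\psi=0$: measurability of $\underline{h}_{\mu}(f_{1,\infty},\cdot)$ follows from the same chain of arguments, and the lower bound improves to $0$, so $\underline{h}_{\mu}(f_{1,\infty})\in[0,+\infty]$.

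The only delicate point is the measurability of $x\mapsto\mu(B_{n}(x,\epsilon))$; all other steps are monotone-convergence bookkeeping or the trivial bound coming from $\mu$ being a probability measure and $\psi$ being continuous on the compact space $X$. The lower semi-continuity argument via Fatou's lemma is the cleanest route and avoids any need for Fubini-type manipulations with the Borel $\sigma$-algebra of $X\times X$.
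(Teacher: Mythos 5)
Your proof is correct and follows essentially the same route as the paper: the crux in both is that $x\mapsto\mu(B_{n}(x,\epsilon))$ is lower semi-continuous (the paper shows the sublevel sets are closed via $B_{n}(x_{0},\epsilon)\subseteq\liminf_{m}B_{n}(x_{m},\epsilon)$, you show it pointwise on indicators via Fatou — the same argument in different clothing), after which measurability and the trivial lower bound $-\|\psi\|_{\infty}$ give well-definedness of the integral. Your write-up is in fact a bit more explicit than the paper's about the remaining bookkeeping (the countable supremum over $\epsilon=1/k$ and the finiteness of the negative part).
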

\begin{proof}
Since the negative part of $P_{\mu,f_{1,\infty}}(x,\psi)$ is finite, it is sufficient to show that the function
$x\mapsto\mu(B_{n}(x,\epsilon))$ is measurable for every $n\in\mathbb{N}$ and $\epsilon>0$. To see this, we shall show that $A_{c}:=\{x:\mu(B_{n}(x,\epsilon))\leq\epsilon\}$ is measurable for every $c\in\mathbb{R}$. Suppose $\{x_{m}\}$ is a sequence in $A_{c}$ with $\lim_{m\to\infty}x_{m}\to x_{0}$. Given $y\in B_{n}(x_{0},\epsilon)$, we have
\begin{equation*}
d(f_{1}^{i}(x_{m}),f_{1}^{i}(y))\leq d(f_{1}^{i}(x_{m}),f_{1}^{i}(x_{0}))+d(f_{1}^{i}(x_{0}),f_{1}^{i}(y))\ \ \text{for all}\ m\in\mathbb{N}\ \text{and}\ 0\leq i<n.
\end{equation*}
Then
\begin{equation*}
B_{n}(x_{0},\epsilon)\subseteq\liminf_{m\to\infty}B_{n}(x_{m},\epsilon)=\bigcup_{m=1}^{\infty}\bigcap_{k=m}^{\infty}B_{n}(x_{k},\epsilon).
\end{equation*}
Hence
\begin{equation*}
\mu(B_{n}(x_{0},\epsilon))\leq\mu\bigg(\bigcup_{m=1}^{\infty}\bigcap_{k=m}^{\infty}B_{n}(x_{k},\epsilon)\bigg)\leq\liminf_{m\to\infty}\mu(B_{n}(x_{m},\epsilon)).
\end{equation*}
It follows that $x_{0}\in A_{c}$, which implies that $A_{c}$ is closed and the function
$x\mapsto\mu(B_{n}(x,\epsilon))$ is measurable.
\end{proof}
We restate \cite[Lemma 1]{MJHWZY} as follows which is much like the classical covering lemma, and its proof in our setting is also true by using the dynamical balls.
\begin{lemma}\label{lemma1}
Let $(X, f_{1,\infty})$ be an NDS on a compact metric space $(X,d)$, $r>0$ and $\mathcal{B}(r):=\{B_{n}(x,r): x\in X, n\in\mathbb{N}\}$. For any family $\mathcal{F}\subseteq\mathcal{B}(r)$, there exists a (not necessarily countable) subfamily $\mathcal{G}\subseteq\mathcal{F}$ consisting of disjoint dynamical balls such that
\begin{equation*}
\bigcup_{B\in\mathcal{F}}B\subseteq\bigcup_{B_{n}(x,r)\in\mathcal{G}}B_{n}(x,3r).
\end{equation*}
\end{lemma}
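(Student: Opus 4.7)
The plan is to adapt the classical Vitali covering argument to dynamical balls, using the time index $n$ as a proxy for size (a ball $B_n(x,r)$ with smaller $n$ imposes fewer constraints, hence is ``larger''). The key geometric observation I would isolate first is the following: if $B_{n}(x,r)$ and $B_{m}(y,r)$ have nonempty intersection and $m\le n$, then $B_{n}(x,r)\subseteq B_{m}(y,3r)$. This is a three-term triangle inequality inside the Bowen metric $d_{1,m}$: picking $z\in B_{n}(x,r)\cap B_{m}(y,r)$, then for any $w\in B_{n}(x,r)$ and any $0\le j<m\le n$,
\begin{equation*}
d(f_{1}^{j}(y),f_{1}^{j}(w))\le d(f_{1}^{j}(y),f_{1}^{j}(z))+d(f_{1}^{j}(z),f_{1}^{j}(x))+d(f_{1}^{j}(x),f_{1}^{j}(w))<3r.
\end{equation*}
Hence $d_{1,m}(y,w)<3r$, giving the asserted inclusion.

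With this in hand I would construct $\mathcal{G}$ by a greedy stratification on $n$. Set $\mathcal{F}_{k}:=\{B_{n}(x,r)\in\mathcal{F}:n=k\}$, and use Zorn's lemma on the poset of pairwise disjoint subfamilies of $\mathcal{F}_{1}$ (the union of any chain of such subfamilies is still pairwise disjoint) to extract a maximal disjoint $\mathcal{G}_{1}\subseteq\mathcal{F}_{1}$. Inductively, assuming $\mathcal{G}_{1},\dots,\mathcal{G}_{k}$ are chosen, let $\mathcal{G}_{k+1}\subseteq\mathcal{F}_{k+1}$ be a maximal pairwise disjoint subfamily whose members are additionally disjoint from every ball already in $\bigcup_{j\le k}\mathcal{G}_{j}$; again Zorn applies. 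Put $\mathcal{G}:=\bigcup_{k\ge 1}\mathcal{G}_{k}$, which is pairwise disjoint by construction.

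To verify the covering property, take an arbitrary $B_{n}(x,r)\in\mathcal{F}$. By maximality of $\mathcal{G}_{n}$, either $B_{n}(x,r)\in\mathcal{G}_{n}$, in which case it lies in its own tripled ball $B_{n}(x,3r)$; or there exists some $m\le n$ and some $B_{m}(y,r)\in\mathcal{G}_{m}$ with $B_{n}(x,r)\cap B_{m}(y,r)\ne\emptyset$. In the second case, the geometric observation from the first paragraph gives $B_{n}(x,r)\subseteq B_{m}(y,3r)$. Taking unions over $\mathcal{F}$ yields $\bigcup_{B\in\mathcal{F}}B\subseteq\bigcup_{B_{m}(y,r)\in\mathcal{G}}B_{m}(y,3r)$, as required.

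The only subtlety worth flagging is that $\mathcal{F}$ (and hence $\mathcal{G}$) may be uncountable, so one truly needs Zorn's lemma rather than a sequential extraction at each level. Beyond that the argument is elementary; compactness of $X$ and continuity of the $f_{n}$ enter only through the well-definedness of the Bowen metrics $d_{1,m}$, and no further hypothesis on $f_{1,\infty}$ is used.
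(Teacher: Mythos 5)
Your proof is correct and is precisely the argument the paper relies on: the paper gives no proof of its own but cites Ma and Wen's covering lemma and asserts the argument transfers to dynamical balls, and that argument is exactly your stratification by the time index $n$ (smaller $n$ playing the role of larger radius), Zorn-maximal disjoint selection level by level, and the triangle-inequality observation that an intersecting ball of larger index is contained in the tripled ball of smaller index. Nothing is missing.
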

Now, we prove Theorem A which shows that the Pesin-Pitskel topological pressure (weighted topological pressure) is determined by the measure-theoretic pressure of Borel probability measures, which extends \cite[Theorem A]{TXCWCZY} for NDSs. Then, we prove Corollary B which shows that the Bowen topological entropy (weighted Bowen topological entropy) is determined by the measure-theoretic lower entropy of Borel probability measures, which extends \cite[Theorem 1]{MJHWZY} for NDSs.

\textbf{Proof of Theorem A and Corollary B}. First, we prove part (1) of Theorem A for Pesin-Pitskel topological pressure. For a fixed $\gamma>0$, since $P_{\mu,f_{1,\infty}}(x,\psi)\leq s$ for all $x\in Z$, we have $Z=\cup_{m=1}^{\infty}Z_{m}$, where
\begin{equation*}
Z_{m}:=\bigg\{x\in Z:\liminf_{n\to\infty}\dfrac{-\log\mu(B_{n}(x,\epsilon))+S_{1,n}\psi(x)}{n}<s+\gamma\ \text{for all}\ \epsilon\in\bigg(0,\frac{1}{m}\bigg)\bigg\}.
\end{equation*}
Now, fix $m\geq 1$ and $\epsilon\in(0,\frac{1}{3m})$. For each $x\in Z_{m}$, there exists a strictly increasing sequence $\{n_{j}(x)\}_{j=1}^{\infty}$ such that
\begin{equation*}
\mu(B_{n_{j}(x)}(x,\epsilon))\geq\text{e}^{-(s+\gamma)n_{j}(x)+S_{1,n_{j}(x)}\psi(x)}\ \ \text{for all}\ j\geq 1.
\end{equation*}
Given $N\geq 1$, let
\begin{equation*}
\mathcal{F}_{N}:=\{B_{n_{j}(x)}(x,\epsilon): x\in Z_{m}\ \text{and}\ n_{j}(x)\geq N\}.
\end{equation*}
Then $Z_{m}\subseteq\cup_{B\in\mathcal{F}_{N}}B$.
Since $\mu$ is a probability measure and $\mu(B)>0$ for every $B\in\mathcal{F}_{N}$, by Lemma \ref{lemma1}, there exists a finite or countable subfamily 
$\mathcal{G}_{N}=\{B_{n_{i}}(x_{i},\epsilon)\}_{i\in I}\subseteq\mathcal{F}_{_{N}}$ consisting of pairwise  disjoint dynamical balls such that 
\begin{equation*}
Z_{m}\subseteq\bigcup_{i\in I}B_{n_{i}}(x_{i},3\epsilon)\ \ \text{and}\ \ \mu(B_{n_{i}}(x_{i},\epsilon))\geq \text{e}^{-(s+\gamma)n_{i}+S_{1,n_{i}}\psi(x_{i})}\ \ \text{for all}\ i\in I.
\end{equation*}
Consequently
\begin{equation*}
\mathcal{M}_{f_{1,\infty}}(N,s+\gamma,3\epsilon,Z_{m},\psi)\leq\sum_{i\in I}\text{e}^{-(s+\gamma)n_{i}+S_{1,n_{i}}\psi(x_{i})}\leq\sum_{i\in I}\mu(B_{n_{i}}(x_{i},\epsilon))\leq 1.
\end{equation*}
When $\epsilon\to 0$, by Theorem \ref{theorem3}, this implies 
\begin{equation*}
P_{f_{1,\infty}}^{B}(Z_{m},\psi)\leq s+\gamma\ \text{for all}\ m\geq 1.
\end{equation*}
Hence, by part (b) of Proposition \ref{theorem2}, we get
\begin{equation*}
P_{f_{1,\infty}}^{B}(Z,\psi)=\sup_{m\geq 1}P_{f_{1,\infty}}^{B}(Z_{m},\psi)\leq s+\gamma.
\end{equation*}
Therefore, $P_{f_{1,\infty}}^{B}(Z,\psi)\leq s$ since $\gamma> 0$ is arbitrary.

Now, we prove part (2) of Theorem A for Pesin-Pitskel topological pressure. Fix $\gamma>0$. For each $m\geq 1$, put
\begin{equation*}
Z_{m}:=\bigg\{x\in Z:\liminf_{n\to\infty}\dfrac{-\log\mu(B_{n}(x,\epsilon))+S_{1,n}\psi(x)}{n}>s-\gamma\ \text{for all}\ \epsilon\in\bigg(0,\frac{1}{m}\bigg]\bigg\}.
\end{equation*}
Since $\dfrac{-\log\mu(B_{n}(x,\epsilon))+S_{1,n}\psi(x)}{n}$ increases when $\epsilon$ decreases, it follows that
\begin{equation*}
Z_{m}=\bigg\{x\in Z:\liminf_{n\to\infty}\dfrac{-\log\mu(B_{n}(x,\epsilon))+S_{1,n}\psi(x)}{n}>s-\gamma\ \ \text{for}\ \ \epsilon=\frac{1}{m}\bigg\}.
\end{equation*}
Then $Z_{m}\subseteq Z_{m+1}$ and $Z=\bigcup_{m=1}^{\infty} Z_{m}$. So by the continuity of the measure, we have
\begin{equation*}
\lim_{m\to\infty}\mu(Z_{m})=\mu(Z).
\end{equation*}
Take $M\geq 1$ with $\mu(Z_{M})>\dfrac{1}{2}\mu(Z)$. For every $N\geq 1$, put
\begin{eqnarray*}
Z_{M,N}
&:= & \bigg\{x\in Z_{M}:\dfrac{-\log\mu(B_{n}(x,\epsilon))+S_{1,n}\psi(x)}{n}>s-\gamma\ \text{for all}\ n\geq N\ \text{and}\ \epsilon\in\bigg(0,\frac{1}{M}\bigg]\bigg\}\\
&= & \bigg\{x\in Z_{M}:\dfrac{-\log\mu(B_{n}(x,\epsilon))+S_{1,n}\psi(x)}{n}>s-\gamma\ \text{for all}\ n\geq N\ \text{and}\ \epsilon=\frac{1}{M}\bigg\}.
\end{eqnarray*}
Thus $Z_{M,N}\subseteq Z_{M,N+1}$ and $Z_{M}=\bigcup_{N=1}^{\infty} Z_{M,N}$. Again, we can find $N^{*}\geq 1$ such that $\mu(Z_{M,N^{*}})>\dfrac{1}{2}\mu(Z_{M})>0$. For every $x\in Z_{M,N^{*}}$, $n\geq N^{*}$ and $0<\epsilon<\frac{1}{M}$, we have
\begin{equation*}
\mu(B_{n}(x,\epsilon))\leq\text{e}^{-(s-\gamma)n+S_{1,n}\psi(x)}.
\end{equation*}
Suppose $\mathcal{F}=\{B_{n_{i}}(y_{i},\frac{\epsilon}{2})\}_{i\geq 1}$ is an open cover of $Z_{M,N^{*}}$ such that $Z_{M,N^{*}}\subseteq\bigcup_{i=1}^{\infty}B_{n_{i}}(y_{i},\frac{\epsilon}{2})$
and
\begin{equation*}
Z_{M,N^{*}}\cap B_{n_{i}}\bigg(y_{i},\frac{\epsilon}{2}\bigg)\neq\emptyset, n_{i}\geq N^{*}\ \text{for all}\ i\geq 1\ \text{and}\ 0<\epsilon<\frac{1}{M}.
\end{equation*}
For each $i\geq 1$, there exists $x_{i}\in Z_{M,N^{*}}\cap B_{n_{i}}(y_{i},\frac{\epsilon}{2})$. Hence, by the triangle inequality, we have
\begin{equation*}
B_{n_{i}}\bigg(y_{i},\frac{\epsilon}{2}\bigg)\subseteq B_{n_{i}}(x_{i},\epsilon).
\end{equation*}
This implies
\begin{eqnarray*}
\sum_{i\geq 1}\text{e}^{-(s-\gamma)n_{i}+S_{1,n_{i}}\psi(y_{i},\frac{\epsilon}{2})}
&\geq & \sum_{i\geq 1}\text{e}^{-(s-\gamma)n_{i}+S_{1,n_{i}}\psi(x_{i})}\\
&\geq & \sum_{i\geq 1} \mu(B_{n_{i}}(x_{i},\epsilon))\\
&\geq & \mu(Z_{M,N^{*}})>0.
\end{eqnarray*}
Therefore, 
\begin{equation*}
M_{f_{1,\infty}}(N,s-\gamma,\frac{\epsilon}{2},Z_{M,N^{*}},\psi)\geq \mu(Z_{M,N^{*}})>0.
\end{equation*}
This together with part (a) of Proposition \ref{theorem2} imply that $P_{f_{1,\infty}}^{B}(Z,\psi)\geq P_{f_{1,\infty}}^{B}(Z_{M,N^{*}},\psi)\geq s-\gamma$. Since $\gamma$ is arbitrary, we get $P_{f_{1,\infty}}^{B}(Z,\psi)\geq s$ which completes the proof of Theorem A for Pesin-Pitskel topological pressure. 

The proof of Theorem A for weighted topological pressure is a direct consequence of Proposition \ref{theorem5} which shows that $P_{f_{1,\infty}}^{B}(Z,\psi)=P_{f_{1,\infty}}^{\mathcal{W}}(Z,\psi)$.

On the other hand, for the proof of Corollary B, it is enough to take $\psi=0$ in the proof of Theorem A.
\section{Proof of theorem C and corollary D}\label{section4}
In this section, we manage the proof of Theorem C which is a Variational principle for topological pressure which links the Pesin-Pitskel topological pressure (weighted topological pressure) on an arbitrary nonempty compact subset to the measure-theoretic pressure of Borel probability measures for NDSs, this extends \cite[Theorem B, part (1)]{TXCWCZY}. Then, we prove Corollary D that is a variational principle for topological entropy which links the Bowen topological entropy (weighted Bowen topological entropy) on an arbitrary nonempty compact subset to the measure-theoretic lower entropy of Borel probability measures for NDSs, which extends \cite[Theorem 1.2, part (1)]{DFWH}.
In Proposition \ref{theorem5}, we prove that the Pesin-Pitskel topological pressure is equal to the weighted topological pressure, i.e., $P_{f_{1,\infty}}^{B}(Z,\psi)=P_{f_{1,\infty}}^{\mathcal{W}}(Z,\psi)$. To do this, we need the following lemma which is called the \emph{Vitali covering lemma} (see \cite[Theorem 2.1]{MP}) and employ the methods used in \cite{DFWH}.
\begin{lemma}\label{lemma2}
Let $(X,d)$ be a compact metric space and $\mathcal{B}=\{B(x_{i},r_{i})\}_{i\in\mathcal{I}}$
be a family of closed (or open) balls in $X$. Then there exists a finite or countable subfamily
$\mathcal{B}^{\prime}=\{B(x_{i},r_{i})\}_{i\in\mathcal{I}^{\prime}}$ of pairwise disjoint balls in $\mathcal{B}$ such that
\begin{equation*}
\bigcup_{B\in\mathcal{B}}B\subseteq\bigcup_{B(x_{i},r_{i})\in\mathcal{B}^{\prime}}B(x_{i},5r_{i}).
\end{equation*}
\end{lemma}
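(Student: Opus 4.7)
The plan is to establish this by a Vitali-type greedy selection argument: I would stratify $\mathcal{B}$ into countably many geometric scale classes indexed by the radii, and inductively pick a maximal pairwise disjoint subfamily at each scale that avoids all balls already chosen at larger scales. Compactness of $X$ enters in two places: first to bound all radii by $R := \diam(X) < \infty$, and later to force countability of the final subfamily.

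Concretely, for each $n \geq 1$ I would set
\begin{equation*}
\mathcal{B}_n := \{B(x_i, r_i) \in \mathcal{B} : R/2^n < r_i \leq R/2^{n-1}\},
\end{equation*}
so that after discarding any zero-radius balls (a trivial special case) the family $\mathcal{B}$ decomposes as $\bigcup_{n \geq 1}\mathcal{B}_n$. Invoking Zorn's lemma, I would choose a maximal pairwise disjoint subfamily $\mathcal{B}_1^\prime \subseteq \mathcal{B}_1$, and then for each $n \geq 2$ a maximal pairwise disjoint subfamily $\mathcal{B}_n^\prime$ of those balls in $\mathcal{B}_n$ that are disjoint from every ball in $\mathcal{B}_1^\prime \cup \cdots \cup \mathcal{B}_{n-1}^\prime$. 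Setting $\mathcal{B}^\prime := \bigcup_{n \geq 1} \mathcal{B}_n^\prime$, the balls in $\mathcal{B}^\prime$ are pairwise disjoint by construction.

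For the $5r$-covering property, I would fix any $B(x, r) \in \mathcal{B}_n$; maximality forces the existence of a ball $B(x^\prime, r^\prime) \in \mathcal{B}_1^\prime \cup \cdots \cup \mathcal{B}_n^\prime$ meeting $B(x, r)$. The stratification then gives $r^\prime > R/2^n$ and $r \leq R/2^{n-1}$, hence $r < 2 r^\prime$. Choosing any $z \in B(x, r) \cap B(x^\prime, r^\prime)$, for every $y \in B(x, r)$ the triangle inequality yields
\begin{equation*}
d(y, x^\prime) \leq d(y, x) + d(x, z) + d(z, x^\prime) < 2r + r^\prime < 5 r^\prime,
\end{equation*}
so $B(x, r) \subseteq B(x^\prime, 5 r^\prime)$, as desired.

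Finally, $\mathcal{B}^\prime$ is at most countable: since $X$ is a compact metric space it is separable, and each of the pairwise disjoint balls in $\mathcal{B}^\prime$ has positive radius and therefore contains a point of a fixed countable dense subset of $X$, giving an injection from $\mathcal{B}^\prime$ into that dense subset. I expect the principal obstacle to be the scale-stratification step, since without decomposing $\mathcal{B}$ into geometric radius ranges the maximality argument alone does not yield the crucial comparability $r^\prime > r/2$ on which the factor $5$ rests; the open-versus-closed distinction and the degenerate zero-radius case are routine bookkeeping that do not affect the argument.
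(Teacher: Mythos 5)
Your proof is correct. The paper does not actually prove this lemma --- it is quoted verbatim from Mattila \cite[Theorem 2.1]{MP} --- and your argument (stratify the radii into dyadic scale classes below $\diam(X)$, extract maximal disjoint subfamilies by Zorn's lemma scale by scale, use the comparability $r<2r'$ forced by the stratification to get $2r+r'<5r'$, and deduce countability from separability) is precisely the standard proof found there. The only point worth flagging is your parenthetical dismissal of zero-radius balls: one cannot simply ``discard'' them, since their union must still be covered and a disjoint family of singletons need not be countable, so the lemma implicitly presupposes $r_i>0$; this is immaterial for the paper's application, where every ball has the same fixed radius $\epsilon>0$ in the metric $d_n$.
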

\begin{proposition}\label{theorem5}
Let $(X, f_{1,\infty})$ be an NDS on a compact metric space $(X,d)$, $Z$ be a subset of $X$ and $\psi\in\mathcal{C}(X,\mathbb{R})$. Then $P_{f_{1,\infty}}^{B}(Z,\psi)=P_{f_{1,\infty}}^{\mathcal{W}}(Z,\psi)$.
\end{proposition}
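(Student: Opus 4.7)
The plan is to prove the two inequalities separately. The easy direction $P_{f_{1,\infty}}^{\mathcal{W}}(Z,\psi) \leq P_{f_{1,\infty}}^{B}(Z,\psi)$ is immediate: any pure cover $\{B_{n_i}(x_i,\epsilon)\}_i$ of $Z$ is a weighted cover with coefficients $c_i = 1$, whence $\mathcal{W}_{f_{1,\infty}}(n,\alpha,\epsilon,Z,\psi) \leq M_{f_{1,\infty}}(n,\alpha,\epsilon,Z,\psi)$ for every choice of parameters, so the corresponding inequality passes to the critical values and, after $\epsilon \to 0$, to the two pressures.

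For the reverse inequality, I would import the Feng-Huang dyadic decomposition from \cite{DFWH}. Fix $\alpha > P_{f_{1,\infty}}^{\mathcal{W}}(Z,\psi)$ and a small $\epsilon > 0$; then $\mathcal{W}_{f_{1,\infty}}(\alpha,\epsilon,Z,\psi) = 0$, so for every $\delta > 0$ and $N \geq 1$ there is a weighted cover $\{(B_{n_i}(x_i,\epsilon), c_i)\}_{i \geq 1}$ of $Z$ with $n_i \geq N$, $c_i \in (0,1]$, and $\sum_i c_i e^{-\alpha n_i + S_{1,n_i}\psi(x_i,\epsilon)} < \delta$. Stratify the indices dyadically by $I_k := \{i : 2^{-k} \leq c_i < 2^{-k+1}\}$ and, using $\sum_{k\geq 1} \tfrac{6}{\pi^2 k^2} = 1$, decompose $Z = \bigcup_{k\geq 1} Z_k$ with
\begin{equation*}
Z_k := \Bigl\{x \in Z : \sum_{i \in I_k} c_i \chi_{B_i}(x) \geq \tfrac{6}{\pi^2 k^2}\Bigr\}.
\end{equation*}
The family $\{B_{n_i}(x_i,\epsilon)\}_{i \in I_k}$ covers $Z_k$, so a Vitali-type covering lemma (Lemma \ref{lemma1} for dynamical balls, or Lemma \ref{lemma2} applied fibrewise in each Bowen metric $d_n$) yields a pairwise disjoint subfamily $\{B_{n_i}(x_i,\epsilon)\}_{i \in J_k}$ whose enlargements $\{B_{n_i}(x_i, c\epsilon)\}_{i \in J_k}$ (with expansion constant $c \in \{3,5\}$) still cover $Z_k$.

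Set $\lambda(\epsilon) := \sup\{|\psi(u)-\psi(v)| : d(u,v) < c\epsilon\}$, so that uniform continuity of $\psi$ on $X$ gives $S_{1,n}\psi(x, c\epsilon) \leq S_{1,n}\psi(x, \epsilon) + n\lambda(\epsilon)$. Combined with the layer bound $c_i \geq 2^{-k}$ this yields
\begin{equation*}
\sum_{i \in J_k} e^{-(\alpha + \lambda(\epsilon))n_i + S_{1,n_i}\psi(x_i, c\epsilon)} \leq 2^k \sum_{i \in I_k} c_i e^{-\alpha n_i + S_{1,n_i}\psi(x_i,\epsilon)} \leq 2^k \delta.
\end{equation*}
Hence $M_{f_{1,\infty}}(N, \alpha + \lambda(\epsilon), c\epsilon, Z_k, \psi) \leq 2^k \delta$, and letting $\delta \to 0$ with $k$ and $N$ fixed forces $M_{f_{1,\infty}}(\alpha + \lambda(\epsilon), c\epsilon, Z_k, \psi) = 0$, so $P_{f_{1,\infty}}^{B}(c\epsilon, Z_k, \psi) \leq \alpha + \lambda(\epsilon)$ for every $k$. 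The fixed-scale version of the countable-union property of Proposition \ref{theorem2}(b) (combining individual $Z_k$-covers with weights $\eta/2^k$) upgrades this to $P_{f_{1,\infty}}^{B}(c\epsilon, Z, \psi) \leq \alpha + \lambda(\epsilon)$. Sending $\epsilon \to 0$ and using $\lambda(\epsilon) \to 0$ yields $P_{f_{1,\infty}}^{B}(Z, \psi) \leq \alpha$, and the arbitrariness of $\alpha$ concludes.

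The main obstacle is the apparent exponential $2^k$ blow-up in the cost bound across dyadic layers; it is neutralised by the observation that the weighted-cover cost $\delta$ can be made arbitrarily small without affecting the layering, so for each fixed $k$ we have $2^k \delta \to 0$ and the Bowen cost on $Z_k$ vanishes. A secondary technicality is the passage from $\epsilon$-dynamical balls to their $c\epsilon$-enlargements, which eats a $\lambda(\epsilon)$ increment in $\alpha$; this is absorbed in the final $\epsilon \to 0$ limit by uniform continuity of $\psi$, in the same spirit as the proof of Theorem \ref{theorem3}.
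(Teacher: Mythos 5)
Your easy direction ($c_i=1$) matches the paper's. The reverse direction, however, has a genuine gap precisely at the point you flag as ``the main obstacle''. The dyadic strata $Z_k$ are defined in terms of the particular weighted cover $\{(B_{n_i}(x_i,\epsilon),c_i)\}$ that you chose to achieve total weighted cost $<\delta$; when you ``let $\delta\to 0$ with $k$ fixed'', the cover changes and therefore so does $Z_k$. You are not bounding the Bowen cost of a \emph{fixed} set by $2^k\delta$ for every $\delta$, so you cannot conclude $M_{f_{1,\infty}}(\alpha+\lambda(\epsilon),c\epsilon,Z_k,\psi)=0$. Nor can the bounds for a single fixed $\delta$ be summed over the strata: since $2^kc_i\in[1,2)$ for $i\in I_k$, one has
$\sum_k 2^k\sum_{i\in I_k}c_i e^{-\alpha n_i+S_{1,n_i}\psi(x_i,\epsilon)}\le 2\sum_{i}e^{-\alpha n_i+S_{1,n_i}\psi(x_i,\epsilon)}$,
which is essentially the \emph{unweighted} cost of the cover --- exactly the quantity a weighted cover does not control, since the weights may be tiny while the number of balls is enormous. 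So the $2^k$ blow-up is real, not apparent, and the proposed neutralisation is circular.

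The missing ingredient is the iterated covering argument of Step~1 of the paper's proof (Feng--Huang's device): after approximating the $c_i$ by rationals and clearing denominators so that they become integer multiplicities, one applies the $5r$-covering lemma (Lemma \ref{lemma2} in the metric $d_n$) not once but $m\approx t$ times, each round removing one unit of multiplicity while still covering the stratum by enlargements; choosing the cheapest of the $m$ resulting disjoint subfamilies yields an unweighted cost at most $\frac{1}{t}\sum_i c_i e^{-\alpha n+S_{1,n}\psi(x_i,\epsilon)}$. The gain of $\frac{1}{t}$ comes from the covering multiplicity at points of the stratum, not from a pointwise lower bound on the $c_i$. Equally important, the paper stratifies by the length $n_i=n$ (not by the weight) with thresholds $n^{-2}t$, so the resulting $n^{2}$ penalty is absorbed by $e^{-\delta n}\le n^{-2}$ after raising $\alpha$ to $\alpha+\delta$, and the per-stratum bounds then sum over $n$ to $\frac{1}{t}\sum_{i\in\mathcal I}c_i e^{-\alpha n_i+S_{1,n_i}\psi(x_i,\epsilon)}$, which is controlled by the weighted cost. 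A single application of the Vitali lemma together with the crude estimate $1\le 2^k c_i$ cannot reproduce this; to repair your argument you would need to import Step~1 (and the accompanying Hausdorff-limit compactness argument of Step~2) essentially in full.
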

\begin{proof}
Taking $c_{i}=1$ in the equation (\ref{eqq7}), we have $\mathcal{W}_{f_{1,\infty}}(N,\alpha,\epsilon,Z,\psi)\leq M_{f_{1,\infty}}(N,\alpha,\epsilon,Z,\psi)$, for every $N\in\mathbb{N}$, that implies $P_{f_{1,\infty}}^{\mathcal{W}}(Z,\psi)\leq P_{f_{1,\infty}}^{B}(Z,\psi)$. To prove the opposite inequality, it suffices to show that
\begin{equation*}
\mathcal{M}_{f_{1,\infty}}(N,\alpha+\delta,6\epsilon,Z,\psi)\leq\mathcal{W}_{f_{1,\infty}}(N,\alpha,\epsilon,Z,\psi)
\end{equation*}
for any $\epsilon,\delta>0$, and $\alpha\in\mathbb{R}$, when $N$ is large enough.

For any $\delta>0$, there exists $N\geq 2$ such that $n^{2}\text{e}^{-n\delta}\leq 1$ for all $n\geq N$. Let $\mathcal{B}=\{(B_{n_{i}}(x_{i},\epsilon),c_{i})\}_{i\in\mathcal{I}}$ be a collection so that $\mathcal{I}\subseteq\mathbb{N}$, $0<c_{i}<\infty$, $n_{i}\geq N$ for all $i$, and
\begin{equation}\label{eqq8}
\sum_{i\in\mathcal{I}}c_{i}\chi_{B_{i}}\geq\chi_{Z},
\end{equation}
where $B_{i}:=B_{n_{i}}(x_{i},\epsilon)$. We shall prove that
\begin{equation}\label{eqq9}
\mathcal{M}_{f_{1,\infty}}(N,\alpha+\delta,6\epsilon,Z,\psi)\leq\sum_{i\in\mathcal{I}}c_{i}\text{e}^{-\alpha n_{i}+S_{1,n_{i}}\psi(x_{i},\epsilon)},
\end{equation}
which yields
\begin{equation*}
\mathcal{M}_{f_{1,\infty}}(N,\alpha+\delta,6\epsilon,Z,\psi)\leq\mathcal{W}_{f_{1,\infty}}(N,\alpha,\epsilon,Z,\psi).
\end{equation*}

Let $\mathcal{I}_{n}:=\{i\in\mathcal{I}:n_{i}=n\}$ and $\mathcal{I}_{n,k}:=\{i\in\mathcal{I}_{n}:i\leq k\}$ for any $n\geq N$ and $k\in\mathbb{N}$. For simplicity, we write $B_{i}:=B_{n_{i}}(x_{i},\epsilon)$ and $5B_{i}:=B_{n_{i}}(x_{i},5\epsilon)$ for $i\in\mathcal{I}$. Without loss of generality, we assume $B_{i}\neq B_{j}$ for $i\neq  j$. For $t>0$, put
\begin{equation*}
Z_{n,t}:=\bigg\{x\in Z: \sum_{i\in\mathcal{I}_{n}} c_{i}\chi_{B_{i}}(x)>t\bigg\}\ \ \text{and}\ \ Z_{n,k,t}:=\bigg\{x\in Z: \sum_{i\in\mathcal{I}_{n,k}} c_{i}\chi_{B_{i}}(x)>t\bigg\}.
\end{equation*}
Then the conclusion follows from the following three steps.

\textbf{Step 1}. 
For every $n\geq N$, $k\in\mathbb{N}$ and $t>0$, there exists a finite subset $\mathcal{J}_{n,k,t}$ of $\mathcal{I}_{n,k}$ such that the balls $B_{i}$ with $i\in\mathcal{J}_{n,k,t}$ are pairwise disjoint, $Z_{n,k,t}\subseteq\bigcup_{i\in\mathcal{J}_{n,k,t}}5B_{i}$ and
\begin{equation*}
\sum_{i\in\mathcal{J}_{n,k,t}}\text{e}^{-\alpha n+S_{1,n}\psi(x_{i},\epsilon)}\leq\dfrac{1}{t}\sum_{i\in\mathcal{I}_{n,k}}c_{i}\text{e}^{-\alpha n+S_{1,n}\psi(x_{i},\epsilon)}.
\end{equation*}
$Proof\ of\ step\ 1$. By approximating the constants $c_{i}$ from above, we may assume that each $c_{i}$ is a positive rational (note that each $\mathcal{I}_{n,k}$ is finite). Then multiplying with a common denominator on both sides of the inequality
in $Z_{n,k,t}$, we may assume that each $c_{i}$ is a positive integer. Let $m$ be the least integer with
$m\geq t$. Denote $\mathcal{C}_{0}=\mathcal{B}_{0}:=\{B_{i}: i\in\mathcal{I}_{n,k}\}$ and define $u:\mathcal{B}_{0}\to\mathbb{N}$ by $u(B_{i})=c_{i}$. We define by
induction integer-valued functions $\nu_{0},\nu_{1},\ldots,\nu_{m}$ on $\mathcal{B}_{0}$ and subfamilies $\mathcal{B}_{1},\mathcal{B}_{2},\ldots,\mathcal{B}_{m}$ of $\mathcal{B}_{0}$ starting
with $\nu_{0}=u$. Using Lemma \ref{lemma2} (in which we take the metric $d_{n}$ instead of $d$) we find a pairwise disjoint subfamily $\mathcal{B}_{1}$ of $\mathcal{B}_{0}$ such that $\bigcup_{B\in\mathcal{B}_{0}}B\subseteq\bigcup_{B\in\mathcal{B}_{1}}5B$, and hence $Z_{n,k,t}\subseteq\bigcup_{B\in\mathcal{B}_{1}}5B$. Define
\begin{equation*}
\nu_{1}(B) = \left\{
\begin{array}{rl}
\nu_{0}(B)-1 & \text{if } B\in\mathcal{B}_{1},\\
\nu_{0}(B) & \text{if } B\in\mathcal{B}_{0}\setminus\mathcal{B}_{1}.
\end{array} \right.
\end{equation*}
Let $\mathcal{C}_{1}:=\{B\in\mathcal{B}_{0}: \nu_{1}(B)\geq 1\}$. Since the subfamily $\mathcal{B}_{1}$ is pairwise disjoint, $Z_{n,k,t}\subseteq\{x: \Sigma_{B\in\mathcal{B}_{0},x\in B}\nu_{1}(B)\geq m-1\}$ which implies that every $x\in Z_{n,k,t}$ belongs to some ball $B\in\mathcal{B}_{0}$
with $\nu_{1}(B)\geq 1$. Thus $Z_{n,k,t}\subseteq\bigcup_{B\in\mathcal{C}_{1}} B$. By Lemma \ref{lemma2}, we pick a pairwise disjoint subfamily $\mathcal{B}_{2}$ of $\mathcal{C}_{1}$ such that $\bigcup_{B\in\mathcal{C}_{1}}B\subseteq\bigcup_{B\in\mathcal{B}_{2}}5B$, and hence $Z_{n,k,t}\subseteq\bigcup_{B\in\mathcal{B}_{2}}5B$. Define
\begin{equation*}
\nu_{2}(B) = \left\{
\begin{array}{rl}
\nu_{1}(B)-1 & \text{if } B\in\mathcal{B}_{2},\\
\nu_{1}(B) & \text{if } B\in\mathcal{B}_{0}\setminus\mathcal{B}_{2}.
\end{array} \right.
\end{equation*}
Let $\mathcal{C}_{2}:=\{B\in\mathcal{B}_{0}: \nu_{2}(B)\geq 1\}$. Then 
$Z_{n,k,t}\subseteq\{x: \Sigma_{B\in\mathcal{B}_{0},x\in B}\nu_{2}(B)\geq m-2\}$ which implies
that every $x\in Z_{n,k,t}$ belongs to some $B\in\mathcal{B}_{0}$ with $\nu_{2}(B)\geq 1$. Thus  $Z_{n,k,t}\subseteq\bigcup_{B\in\mathcal{C}_{2}}B$.
Repeating the above process for $j=1,2,\ldots,m$, we get subfamilies $\mathcal{C}_{j-1}$ and $\mathcal{B}_{j}$ of $\mathcal{B}_{0}$ such that each $\mathcal{B}_{j}$ consists of pairwise disjoint elements, $\mathcal{B}_{j}\subseteq\mathcal{C}_{j-1}$, $Z_{n,k,t}\subseteq\bigcup_{B\in\mathcal{B}_{j}}5B$, $Z_{n,k,t}\subseteq\bigcup_{B\in\mathcal{C}_{j-1}}B$ and
\begin{equation*}
\nu_{j}(B) = \left\{
\begin{array}{rl}
\nu_{j-1}(B)-1 & \text{if } B\in\mathcal{B}_{j},\\
\nu_{j-1}(B) & \text{if } B\in\mathcal{B}_{0}\setminus\mathcal{B}_{j}.
\end{array} \right.
\end{equation*}
Therefore, 
\begin{eqnarray*}
\sum_{j=1}^{m}\sum_{B_{n}(x_{i},\epsilon)\in\mathcal{B}_{j}}\text{e}^{-\alpha n+S_{1,n}\psi(x_{i},\epsilon)}
&= & \sum_{j=1}^{m}\sum_{B_{n}(x_{i},\epsilon)\in\mathcal{B}_{j}}(\nu_{j-1}(B_{n}(x_{i},\epsilon))-\nu_{j}(B_{n}(x_{i},\epsilon)))\text{e}^{-\alpha n+S_{1,n}\psi(x_{i},\epsilon)}\\
&\leq & \sum_{B_{n}(x_{i},\epsilon)\in\mathcal{B}_{0}}\sum_{j=1}^{m}(\nu_{j-1}(B_{n}(x_{i},\epsilon))-\nu_{j}(B_{n}(x_{i},\epsilon)))\text{e}^{-\alpha n+S_{1,n}\psi(x_{i},\epsilon)}\\
&\leq & \sum_{B_{n}(x_{i},\epsilon)\in\mathcal{B}_{0}}u(B_{n}(x_{i},\epsilon))\text{e}^{-\alpha n+S_{1,n}\psi(x_{i},\epsilon)}\\
&= & \sum_{i\in\mathcal{I}_{n,k}}c_{i}\text{e}^{-\alpha n+S_{1,n}\psi(x_{i},\epsilon)}.
\end{eqnarray*}
Now, choose $j_{0}\in\{1,2,\ldots,m\}$ such that $\sum_{B_{n}(x_{i},\epsilon)\in\mathcal{B}_{j_{0}}}\text{e}^{-\alpha n+S_{1,n}\psi(x_{i},\epsilon)}$ is the smallest. Then we have
\begin{eqnarray*}
\sum_{B_{n}(x_{i},\epsilon)\in\mathcal{B}_{j_{0}}}\text{e}^{-\alpha n+S_{1,n}\psi(x_{i},\epsilon)}
&\leq & \dfrac{1}{m}\sum_{i\in\mathcal{I}_{n,k}}c_{i}\text{e}^{-\alpha n+S_{1,n}\psi(x_{i},\epsilon)}\\
&\leq & \dfrac{1}{t}\sum_{i\in\mathcal{I}_{n,k}}c_{i}\text{e}^{-\alpha n+S_{1,n}\psi(x_{i},\epsilon)}.
\end{eqnarray*}
Hence, it is enough to take $\mathcal{J}_{n,k,t}=\{i\in\mathcal{I}_{n,k}: B_{i}\in\mathcal{B}_{j_{0}}\}$.

\textbf{Step 2}. 
For every $n\geq N$ and $t>0$, we have
\begin{equation*}
\mathcal{M}_{f_{1,\infty}}(N,\alpha+\delta,6\epsilon,Z_{n,t},\psi)\leq\dfrac{1}{n^{2}t}\sum_{i\in\mathcal{I}_{n}}c_{i}\text{e}^{-\alpha n+S_{1,n}\psi(x_{i},\epsilon)}.
\end{equation*}
$Proof\ of\ step\ 2$. For $Z_{n,t}=\emptyset$ the inequality is obvious. Assume $Z_{n,t}\neq\emptyset$. Then $Z_{n,k,t}\neq\emptyset$ when $k$ is large enough, because of $Z_{n,k,t}\uparrow Z_{n,t}$. Let $\mathcal{J}_{n,k,t}$ be the sets constructed in Step 1. Then $\mathcal{J}_{n,k,t}\neq\emptyset$ when $k$ is sufficiently large. Define $E_{n,k,t}:=\{x_{i}: i\in\mathcal{J}_{n,k,t}\}$. 
Note that the family of all nonempty compact subsets of $X$ is compact with respect to the Hausdorff distance (see
Federer \cite[2.10.21]{FH}). Therefore, there exists a subsequence $\{k_{j}\}$ in $\mathbb{N}$ and a non-empty compact
set $E_{n,t}\subseteq X$ such that $E_{n,k_{j},t}$ converges to $E_{n,t}$ in the Hausdorff distance as $j\to\infty$. Note that any two points in $E_{n,k,t}$ have a distance not less than $\epsilon$ with respect to $d_{n}$, which implies $E_{n,t}$ has the same property. Thus $E_{n,t}$ is a finite set. This yields that $|E_{n,k_{j},t}|=|E_{n,t}|$ when $j$ is large enough. So, when $j$ is sufficiently large, we have
\begin{equation*}
Z_{n,k_{j},t}\subseteq\bigcup_{i\in\mathcal{J}_{n,k_{j},t}}5B_{i}=\bigcup_{x\in E_{n,k_{j},t}}B_{n}(x,5\epsilon)\subseteq\bigcup_{x\in E_{n,t}}B_{n}(x,5.5\epsilon).
\end{equation*}
Thus $Z_{n,t}\subseteq\bigcup_{x\in E_{n,t}}B_{n}(x,6\epsilon)$. Since $|E_{n,k_{j},t}|=|E_{n,t}|$ when $j$ is large enough, we have
\begin{eqnarray*}
\sum_{x\in E_{n,t}}\text{e}^{-\alpha n+S_{1,n}\psi(x)}
&\leq & \sum_{x\in E_{n,k_{j},t}}\text{e}^{-\alpha n+S_{1,n}\psi(x,\epsilon)}\\
&\leq & \dfrac{1}{t}\sum_{i\in\mathcal{I}_{n}}c_{i}\text{e}^{-\alpha n+S_{1,n}\psi(x_{i},\epsilon)}.
\end{eqnarray*}
Therefore,
\begin{eqnarray*}
\mathcal{M}_{f_{1,\infty}}(N,\alpha+\delta,6\epsilon,Z_{n,t},\psi)
&\leq & \sum_{x\in E_{n,t}}\text{e}^{-(\alpha+\delta)n+S_{1,n}\psi(x)}\\
&\leq & \dfrac{1}{\text{e}^{n\delta}t}\sum_{i\in\mathcal{I}_{n}}c_{i}\text{e}^{-\alpha n+S_{1,n}\psi(x_{i},\epsilon)}\\
&\leq & \dfrac{1}{n^{2}t}\sum_{i\in\mathcal{I}_{n}}c_{i}\text{e}^{-\alpha n+S_{1,n}\psi(x_{i},\epsilon)}.
\end{eqnarray*}

\textbf{Step 3}. For any $t\in(0,1)$, we have the following inequality that implies the inequality (\ref{eqq9}):
\begin{equation*}
\mathcal{M}_{f_{1,\infty}}(N,\alpha+\delta,6\epsilon,Z,\psi)\leq\dfrac{1}{t}\sum_{i\in\mathcal{I}}c_{i}\text{e}^{-\alpha n_{i}+S_{1,n_{i}}\psi(x_{i},\epsilon)}.
\end{equation*}
$Proof\ of\ step\ 3$.
Fix $t\in(0,1)$. Then, it follows that $Z\subseteq\bigcup_{n=N}^{\infty}Z_{n,n^{-2}t}$ from the inequality (\ref{eqq8}). Now, we prove the following inequality
\begin{equation}\label{eqq10}
\mathcal{M}_{f_{1,\infty}}(N,\alpha+\delta,6\epsilon,\bigcup_{n=N}^{\infty}Z_{n,n^{-2}t},\psi)\leq\sum_{n=N}^{\infty}\mathcal{M}_{f_{1,\infty}}(N,\alpha+\delta,6\epsilon,Z_{n,n^{-2}t},\psi).
\end{equation}
Given $\gamma>0$, for every $n\geq N$, since
\begin{equation*}
\mathcal{M}_{f_{1,\infty}}(N,\alpha+\delta,6\epsilon,Z_{n,n^{-2}t},\psi)=\inf\bigg\{\sum_{i}\text{e}^{-(\alpha+\delta)n_{i}+S_{1,n_{i}}\psi(x_{i})}: Z_{n,n^{-2}t}\subseteq\bigcup_{i}B_{n_{i}}(x_{i},6\epsilon)\bigg\},
\end{equation*}
we can choose a sequence of subsets $\{B_{n_{j}}(x_{n_{j}},6\epsilon)\}_{j\in\mathbb{N}}$ such that $Z_{n,n^{-2}t}\subseteq\bigcup_{j\in\mathbb{N}}B_{n_{j}}(x_{n_{j}},6\epsilon)$ and
\begin{equation*}
\sum_{j\in\mathbb{N}}\text{e}^{-(\alpha+\delta)n_{j}+S_{1,n_{j}}\psi(x_{n_{j}})}\leq\mathcal{M}_{f_{1,\infty}}(N,\alpha+\delta,6\epsilon,Z_{n,n^{-2}t},\psi)+\dfrac{\gamma}{2^{n}}.
\end{equation*}
Then, it is clear that the union of sequences $\{B_{n_{j}}(x_{n_{j}},6\epsilon)\}_{j\in\mathbb{N}}$ for $n\geq N$ is a cover of $\bigcup_{n=N}^{\infty}Z_{n,n^{-2}t}$. This implies
\begin{eqnarray*}
\mathcal{M}_{f_{1,\infty}}(N,\alpha+\delta,6\epsilon,\bigcup_{n=N}^{\infty}Z_{n,n^{-2}t},\psi)
&\leq &\sum_{n=N}^{\infty}\sum_{j\in\mathbb{N}}\text{e}^{-(\alpha+\delta)n_{j}+S_{1,n_{j}}\psi(x_{n_{j}})}\\
&\leq & \sum_{n=N}^{\infty}\mathcal{M}_{f_{1,\infty}}(N,\alpha+\delta,6\epsilon,Z_{n,n^{-2}t},\psi)+\gamma.
\end{eqnarray*}
Since $\gamma$ is arbitrarily small, we get the inequality (\ref{eqq10}). Thus, by the Step 2, we have
\begin{eqnarray*}
\mathcal{M}_{f_{1,\infty}}(N,\alpha+\delta,6\epsilon,Z,\psi)
&\leq & \sum_{n=N}^{\infty}\mathcal{M}_{f_{1,\infty}}(N,\alpha+\delta,6\epsilon,Z_{n,n^{-2}t},\psi)\\
&\leq & \sum_{n=N}^{\infty}\dfrac{1}{t}\sum_{i\in\mathcal{I}_{n}}c_{i}\text{e}^{-\alpha n+S_{1,n}\psi(x_{i},\epsilon)}\\
&= &\dfrac{1}{t}\sum_{i\in\mathcal{I}}c_{i}\text{e}^{-\alpha n_{i}+S_{1,n_{i}}\psi(x_{i},\epsilon)},
\end{eqnarray*}
that implies the inequality (\ref{eqq9}), and so 
\begin{equation*}
\mathcal{M}_{f_{1,\infty}}(N,\alpha+\delta,6\epsilon,Z,\psi)\leq\mathcal{W}_{f_{1,\infty}}(N,\alpha,\epsilon,Z,\psi).
\end{equation*}
This completes the proof, by Theorem \ref{theorem3}.
\end{proof}
To prove Theorem C, we need the following dynamical Frostman's lemma which is an analogue of Feng and Huang's approximation. For completeness, we give its proof. Our arguments follow the proof of \cite[Lemma 3.4]{DFWH} which is adapted from Howroyd's elegant argument (see \cite[Theorem 2]{HJD} and \cite[Theorem 8.17]{MP}).
\begin{lemma}\label{lemma3}
Let $(X, f_{1,\infty})$ be an NDS on a compact metric space $(X,d)$, $Z$ be a nonempty compact subset of $X$ and $\psi\in\mathcal{C}(X,\mathbb{R})$. Let $\alpha\in\mathbb{R}$, $N\in\mathbb{N}$,
and $\epsilon>0$. Suppose that $c:=\mathcal{W}_{f_{1,\infty}}(N,\alpha,\epsilon,Z,\psi)>0$. Then there is  $\mu\in\mathcal{M}(X)$ such that $\mu(Z)=1$ and
\begin{equation*}
\mu(B_{n}(x,\epsilon))\leq\dfrac{1}{c}\text{e}^{-\alpha n+S_{1,n}\psi(x,\epsilon)},\ \ \ \forall x\in X,\ n\geq N.
\end{equation*}
\end{lemma}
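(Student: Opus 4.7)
The approach is a Hahn--Banach/Frostman-type argument in the spirit of Howroyd, as suggested by the lemma's statement (cf.~\cite{HJD,MP,DFWH}). The goal is to construct a sublinear functional on $C(X)$ whose value at the constant function $\mathbf{1}$ equals $c$ and which vanishes on continuous functions supported off $Z$; the Riesz representation theorem will then yield the desired measure after normalization.

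First, for every $f\in C(X)$ with $f\geq 0$, define
\[
p(f):=\inf\Big\{\sum_{i} c_{i}\,e^{-\alpha n_{i}+S_{1,n_{i}}\psi(x_{i},\epsilon)}:\ c_{i}\geq 0,\ n_{i}\geq N,\ x_{i}\in X,\ \sum_{i} c_{i}\chi_{B_{n_{i}}(x_{i},\epsilon)}(y)\geq f(y)\ \forall y\in Z\Big\}.
\]
The key modification relative to $\mathcal{W}_{f_{1,\infty}}$ is that the weighted cover is only required to dominate $f$ on $Z$. The same manipulations behind the subadditivity of $\mathcal{W}_{f_{1,\infty}}$ (cf.~Proposition~\ref{theorem5}) show that $p$ is subadditive, positively homogeneous, and monotone on the cone of non-negative continuous functions. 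Moreover the conditions ``$\sum c_{i}\chi_{B_{i}}\geq 1$ on $Z$'' and ``$\sum c_{i}\chi_{B_{i}}\geq\chi_{Z}$ on $X$'' coincide (the $c_{i}$ are nonnegative), so $p(\mathbf{1})=c$. Extend $p$ to all of $C(X)$ via $\hat p(f):=p(f^{+})$; subadditivity and positive homogeneity of $\hat p$ follow from $(f+g)^{+}\leq f^{+}+g^{+}$ and $(\lambda f)^{+}=\lambda f^{+}$ for $\lambda\geq 0$, combined with the same properties of $p$.

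Now on the one-dimensional subspace $\mathbb{R}\cdot\mathbf{1}\subset C(X)$ set $L_{0}(t\mathbf{1}):=tc$, and observe $L_{0}\leq\hat p$ (equality for $t\geq 0$, trivial for $t<0$). Hahn--Banach produces a linear extension $L:C(X)\to\mathbb{R}$ with $L\leq\hat p$, and the inequality $L(-f)\leq\hat p(-f)=p(0)=0$ for $f\geq 0$ shows $L$ is positive. By Riesz representation, $L(f)=\int f\,d\mu$ for a finite positive Borel measure $\mu$ on $X$ with $\mu(X)=L(\mathbf{1})=c$. To see $\mu$ is concentrated on $Z$: for any compact $K\subseteq X\setminus Z$, Urysohn's lemma produces $f\in C(X)$ with $0\leq f\leq 1$, $f\equiv 1$ on $K$, $f\equiv 0$ on $Z$; then $p(f)=0$ (empty cover), so $\mu(K)\leq L(f)\leq\hat p(f)=0$. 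Setting $\tilde\mu:=\mu/c$ gives a probability measure with $\tilde\mu(Z)=1$.

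Finally, to verify the dynamical-ball bound, fix $x\in X$ and $n\geq N$ and set $B:=B_{n}(x,\epsilon)$. For any compact $K\subseteq B$, Urysohn supplies $f\in C(X)$ with $0\leq f\leq\chi_{B}$ and $f\equiv 1$ on $K$; since the singleton family $\{(B,1)\}$ dominates $f$ pointwise on $X$, we get $p(f)\leq e^{-\alpha n+S_{1,n}\psi(x,\epsilon)}$, whence $\mu(K)\leq L(f)\leq e^{-\alpha n+S_{1,n}\psi(x,\epsilon)}$. Inner regularity of $\mu$ on the open set $B$ and dividing by $c$ produce the required bound $\tilde\mu(B)\leq c^{-1}e^{-\alpha n+S_{1,n}\psi(x,\epsilon)}$. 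The main difficulty is calibrating the definition of $p$: the ``coverage only on $Z$'' restriction is precisely what simultaneously achieves $p(\mathbf{1})=c$, $p(f)=0$ whenever $f|_{Z}\equiv 0$, and the single-ball upper bound used in the last step, while still keeping $p$ sublinear so that Hahn--Banach applies.
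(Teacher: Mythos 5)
Your proposal is correct and follows essentially the same Howroyd/Hahn--Banach route as the paper: the paper's functional $p(\phi)=\frac{1}{c}\mathcal{W}_{f_{1,\infty}}(N,\alpha,\epsilon,\chi_{Z}\cdot\phi,\psi)$ is exactly your $\hat p/c$ (requiring coverage only on $Z$ is the same as dominating $\chi_{Z}\cdot\phi$), and the Hahn--Banach extension, Riesz representation, Urysohn argument for $\mu(Z)=1$, and the single-ball estimate with inner regularity all match. The only differences are cosmetic: you normalize by $c$ at the end rather than in the definition of $p$, and you pass to $f^{+}$ to extend $p$ to all of $C(X)$ where the paper builds this into $\chi_{Z}\cdot\phi$ directly.
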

\begin{proof}
Clearly $c<\infty$. We define a function $p$ on the Banach space $\mathcal{C}(X,\mathbb{R})$ of all continuous real-valued functions on $X$ equipped with the supremum norm by
\begin{equation*}
p(\phi):=\dfrac{1}{c}\mathcal{W}_{f_{1,\infty}}(N,\alpha,\epsilon,\chi_{Z}.\phi,\psi).
\end{equation*}

Let $\textbf{1}\in\mathcal{C}(X,\mathbb{R})$ denote the constant function $\textbf{1}(x)\equiv 1$. It is easy to verify that
\begin{itemize}
\item[(1)]
$p(\phi+\varphi)\leq p(\phi)+p(\varphi)$ for any $\phi,\varphi\in\mathcal{C}(X,\mathbb{R})$.
\item[(2)]
$p(t\phi)=t p(\phi)$ for any $t\geq 0$ and $\phi\in\mathcal{C}(X,\mathbb{R})$.
\item[(3)]
$p(\textbf{1})=1$, $0\leq p(\phi)\leq\|\phi\|_{\infty}$ for any $\phi\in\mathcal{C}(X,\mathbb{R})$, and $p(\phi)=0$ for any $\phi\in\mathcal{C}(X,\mathbb{R})$ with $\phi\leq 0$.
\end{itemize}
By the Hahn-Banach theorem, we can extend the linear functional $t\mapsto tp(\textbf{1})$, $t\in\mathbb{R}$, from the subspace of the constant functions to a linear functional $L:\mathcal{C}(X,\mathbb{R})\to\mathbb{R}$ satisfying
\begin{equation*}
L(\textbf{1})=p(\textbf{1})=1\ \ \text{and}\ \ -p(-\phi)\leq L(\phi)\leq p(\phi)\ \ \text{for any}\ \ \phi\in\mathcal{C}(X,\mathbb{R}). 
\end{equation*}
If $\phi\in\mathcal{C}(X,\mathbb{R})$ with $\phi\geq 0$, then $p(-\phi)=0$ and so $L(\phi)\geq 0$. Hence combining the fact that $L(\textbf{1})=1$, we can use the Riesz representation theorem to find a Borel probability measure $\mu$ on $X$ such that $L(\phi)=\int\phi\ d\mu$ for $\phi\in\mathcal{C}(X,\mathbb{R})$.

Now we show that $\mu(Z)=1$. To see this, for any compact set $E\subseteq X\setminus Z$, by the Urysohn lemma there is $\phi\in\mathcal{C}(X,\mathbb{R})$ such that $0\leq\phi\leq 1$, $\phi(x)=1$ for $x\in E$ and $\phi(x)=0$ for $x\in Z$. It follows that $\chi_{Z}.\phi\equiv 0$ and thus $p(\phi)=0$. Hence $\mu(E)\leq L(\phi)\leq p(\phi)=0$. Since $\mu$ is regular, we have $\mu(X\setminus Z)=0$. This means that $\mu(Z)=1$.

Now, we show that
\begin{equation*}
\mu(B_{n}(x,\epsilon))\leq\dfrac{1}{c}\text{e}^{-\alpha n+S_{1,n}\psi(x,\epsilon)},\ \ \ \forall x\in X, n\geq N.
\end{equation*}
To see this, for any compact set $E\subseteq B_{n}(x,\epsilon)$, by the Urysohn lemma, there exists $\phi\in\mathcal{C}(X,\mathbb{R})$ such that $0\leq\phi\leq 1$, $\phi(x)=1$ for $x\in E$ and $\phi(x)=0$ for $x\in X\setminus B_{n}(x,\epsilon)$. Then $\mu(E)\leq L(\phi)\leq p(\phi)$. Since $\chi_{Z}.\phi\leq\chi_{B_{n}(x,\epsilon)}$ and $n\geq N$, we have $\mathcal{W}_{f_{1,\infty}}(N,\alpha,\epsilon,\chi_{Z}.\phi,\psi)\leq\text{e}^{-\alpha n+S_{1,n}\psi(x,\epsilon)}$ that implies $p(\phi)\leq\frac{1}{c}\text{e}^{-\alpha n+S_{1,n}\psi(x,\epsilon)}$ and $\mu(E)\leq\frac{1}{c}\text{e}^{-\alpha n+S_{1,n}\psi(x,\epsilon)}$. Using the regularity of $\mu$, we obtain $\mu(B_{n}(x,\epsilon))\leq\frac{1}{c}\text{e}^{-\alpha n+S_{1,n}\psi(x,\epsilon)}$.
\end{proof}
\textbf{Proof of Theorem C and Corollary D}.
Assume $\mu(Z)=1$, since $P_{\mu,f_{1,\infty}}(X,\psi)=\int P_{\mu,f_{1,\infty}}(x,\psi)\ d\mu(x)$, it follows that the set
\begin{equation*}
Z_{\delta}:=\{x\in Z: P_{\mu,f_{1,\infty}}(x,\psi)\geq P_{\mu,f_{1,\infty}}(X,\psi)-\delta\}
\end{equation*}
has positive $\mu$-measure for all $\delta>0$, i.e., $\mu(Z_{\delta})>0$ for all $\delta>0$. Thus, by part (a) of Proposition \ref{theorem2} and part (2) of Theorem A, we obtain
\begin{equation*}
P_{f_{1,\infty}}^{B}(Z,\psi)\geq P_{f_{1,\infty}}^{B}(Z_{\delta},\psi)\geq P_{\mu,f_{1,\infty}}(X,\psi)-\delta\ \ \text{for all}\ \ \delta>0.
\end{equation*}
This implies that
\begin{equation*}
P_{f_{1,\infty}}^{B}(Z,\psi)\geq\sup\{P_{\mu,f_{1,\infty}}(X,\psi):\mu\in\mathcal{M}(X), \mu(Z)=1\}.
\end{equation*}

Now, we show the converse inequality. Without loss of generality, we can assume that
$P_{f_{1,\infty}}^{B}(Z,\psi)\neq -\infty$, otherwise there is nothing to prove. By Proposition \ref{theorem5}, 
we have $P_{f_{1,\infty}}^{B}(Z,\psi)=P_{f_{1,\infty}}^{\mathcal{W}}(Z,\psi)$. Fix a small number $\beta>0$ and let $\alpha:=P_{f_{1,\infty}}^{B}(Z,\psi)-\beta$. Since
\begin{equation*}
\lim_{\epsilon\to 0}\liminf_{n\to\infty}\dfrac{1}{n}[S_{1,n}\psi(x)-S_{1,n}\psi(x,\epsilon)]=0
\end{equation*}
for all $x\in X$, we get
\begin{equation*}
\liminf_{n\to\infty}\dfrac{1}{n}[S_{1,n}\psi(x)-S_{1,n}\psi(x,\epsilon)]>-\beta
\end{equation*}
for all $x\in X$ and all sufficiently small $\epsilon>0$. Take such an $\epsilon>0$ and $N\in\mathbb{N}$ such that $c:=\mathcal{W}_{f_{1,\infty}}(N,\alpha,\epsilon,Z,\psi)>0$. By
Lemma \ref{lemma3}, there exists $\mu\in\mathcal{M}(X)$ with $\mu(Z)=1$ such that
\begin{equation*}
\mu(B_{n}(x,\epsilon))\leq\dfrac{1}{c}\text{e}^{-\alpha n+S_{1,n}\psi(x,\epsilon)}\ \ \text{for all}\ \ x\in X\ \ \text{and}\ \ n\geq N.
\end{equation*}
This implies that
\begin{equation*}
\liminf_{n\to\infty}\dfrac{-\log\mu(B_{n}(x,\epsilon))+S_{1,n}\psi(x,\epsilon)}{n}\geq\alpha\ \ \text{for all}\ \ x\in X.
\end{equation*}
Therefore, for all $x\in X$ we get
\begin{eqnarray*}
P_{\mu,f_{1,\infty}}(x,\psi)
&= & \lim_{\epsilon\to 0}\liminf_{n\to\infty}\dfrac{-\log\mu(B_{n}(x,\epsilon))+S_{1,n}\psi(x)}{n}\\
&\geq & \liminf_{n\to\infty}\dfrac{-\log\mu(B_{n}(x,\epsilon))+S_{1,n}\psi(x)}{n}\\
&= & \liminf_{n\to\infty}\dfrac{-\log\mu(B_{n}(x,\epsilon))+S_{1,n}\psi(x)+S_{1,n}\psi(x,\epsilon)-S_{1,n}\psi(x,\epsilon)}{n}\\
&\geq & \liminf_{n\to\infty}\dfrac{-\log\mu(B_{n}(x,\epsilon))+S_{1,n}\psi(x,\epsilon)}{n}+\liminf_{n\to\infty}\dfrac{1}{n}[S_{1,n}\psi(x)-S_{1,n}\psi(x,\epsilon)]\\
&\geq & \alpha-\beta=P_{f_{1,\infty}}^{B}(Z,\psi)-2\beta.
\end{eqnarray*}
Hence
\begin{equation*}
P_{\mu,f_{1,\infty}}(X,\psi)=\int P_{\mu,f_{1,\infty}}(x,\psi)\ d\mu(x)\geq P_{f_{1,\infty}}^{B}(Z,\psi)-2\beta.
\end{equation*}
Since $\beta$ is arbitrary, we get the desired inequality which completes the proof of Theorem C for Pesin-Pitskel topological pressure. 

The proof of Theorem C for weighted topological pressure is a direct consequence of Proposition \ref{theorem5} which shows that $P_{f_{1,\infty}}^{B}(Z,\psi)=P_{f_{1,\infty}}^{\mathcal{W}}(Z,\psi)$.  

On the other hand, for the proof of Corollary D, it is enough to take $\psi=0$ in the proof of Theorem C.
\section*{Acknowledgements}
The authors would like to thank the respectful referee for his/her comments on the manuscript.

\end{document}